\documentclass[11pt,reqno]{amsart}
\usepackage{graphicx} 
\usepackage{hyperref}
\usepackage{fourier}
\usepackage{fullpage}
\usepackage{amsmath,amssymb,amsthm}
\usepackage[shortlabels]{enumitem}
\usepackage[mathscr]{euscript}
\usepackage{dutchcal}
\usepackage{upgreek}
\usepackage{comment}
\usepackage{csquotes}
\usepackage{tikz}
\usepackage[all]{xy}
\allowdisplaybreaks

\makeatletter
\def\resetMathstrut@{%
  \setbox\z@\hbox{%
    \mathchardef\@tempa\mathcode`\(\relax
    \def\@tempb##1"##2##3{\the\textfont"##3\char"}%
    \expandafter\@tempb\meaning\@tempa \relax
  }%
  \ht\Mathstrutbox@1.2\ht\z@ \dp\Mathstrutbox@1.2\dp\z@
}
\makeatother

\newtheorem{theorem}{Theorem}
\newtheorem{lemma}[theorem]{Lemma}
\newtheorem{prop}[theorem]{Proposition}

\newtheorem{corollary}[theorem]{Corollary}

\newtheorem{definition}[theorem]{Definition}

\newtheorem{remark}[theorem]{Remark}

\newcommand{\R}{\mathbb{R}}
\newcommand{\Z}{\mathbb{Z}}
\newcommand{\N}{\mathbb{N}}

\DeclareMathOperator{\Lip}{Lip}

\renewcommand{\le}{\leqslant}
\renewcommand{\ge}{\geqslant}

\renewcommand{\setminus}{\smallsetminus}
\renewcommand{\subset}{\subseteq}

\newcommand{\ee}{\mathsf{e}}

\newcommand{\eqdef}{\stackrel{\mathrm{def}}{=}}
\newcommand{\cZ}{\mathcal{Z}}

\newcommand{\X}{\mathbf X}
\newcommand{\Id}{\mathsf{Id}}
\newcommand{\cj}{\mathcal{j}}

\newcommand{\NN}{\mathcal{N}}
\newcommand{\MM}{\mathcal{M}}

\newcommand{\TT}{\mathscr{T}}

\renewcommand{\SS}{\mathscr{S}}
\newcommand{\sub}{\mathscr{C}}

\newcommand{\bX}{\mathbf{X}}

\newcommand{\bY}{\mathbf{Y}}
\newcommand{\bZ}{\mathbf{Z}}

\newcommand{\f}{\varphi}
\renewcommand{\d}{\delta}
\newcommand{\ud}[0]{\,\mathrm{d}}

\renewcommand{\nu}{\upnu}

\renewcommand{\r}{\otherrho}

\title{De-H\"oldering factorization}

\date{}

\author{Assaf Naor}
\address{Department of Mathematics, Princeton NJ 08544-1000}
\email{naor@math.princeton.edu}

\thanks{Supported  by NSF grants DMS-2453936 and DMS-2604754, and a Simons Investigator award.  }

\usepackage{xcolor}
\definecolor{maroon}{HTML}{AF3235}

\begin{document} 

\begin{abstract} 
We study a factorization notion for Lipschitz functions between metric spaces in which such a function is written as a composition of a H\"older function and a function that suitably  ``undoes'' the H\"older regularity. We show simple ways to construct such ``de-H\"oldering'' factorizations. If the identity mapping on a metric space $\MM$ admits a de-H\"oldering factorization through a metric space $\cZ$ that has a conical geodesic bicombing,  then the class of metric spaces from  which one can extend $\cZ$-valued Lipschitz functions is shown to be contained  in   the corresponding class of metric spaces for $\MM$-valued Lipschitz functions. As a quick consequence of these abstract permanence properties, we deduce that every $L_1$-valued Lipschitz function from a subset of $\ell_2$ can be extended to a Lipschitz function that takes values in $L_1$ and is defined on all of $\ell_2$, answering a 1992 question of Ball. By work of Makarychev and Makarychev, this implies that every weighted graph has a vertex cut sparsifier of size $n$ and quality $O(\sqrt{\log n})$, improving Moitra's 2009  bound. We also show that for every metric space $\cZ$ that has a conical geodesic bicombing, any metric transform of a metric space $\MM$ has $\cZ$-valued Lipschitz extension modulus at most a universal  constant multiple of the $\cZ$-valued Lipschitz extension modulus of $\MM$ itself,  improving the 2002 bound of Brudnyi and Shvartsman. 
\end{abstract}

\maketitle

\vspace{-0.3in}




\section{Introduction}

Let $(\SS,d_\SS),(\TT,d_\TT)$ be metric spaces. For $\sub\subset \SS$, denote by $\ee(\SS,\sub;\TT)$ the infimum over $K\in [1,\infty]$ such that for every $L\ge 0$ and every $L$-Lipschitz function  $f:\sub\to \TT$ there is a $KL$-Lipschitz function $F:\SS\to \TT$ that extends $f$. When $(\bY;\|\cdot\|_\bY)$ is a Banach space, one also denotes by $\ee_l(\SS,\sub;\bY)$ the analogous parameter with the additional requirement that $F:\SS\to \bY$ depends linearly on $f:\sub\to \bY$. The Lipschitz extension modulus $\ee(\SS;\TT)$ of the above pair of metric spaces is  defined to be the supremum of $\ee(\SS,\sub;\TT)$ over all the nonempty subsets $\sub$ of $\SS$, and one analogously defines $\ee_l(\SS;\bY)$; see Figure~\ref{fig:commuting}.

\begin{figure}[h]
\centering
\fbox{
\begin{minipage}{6.25in}
$$
\xymatrix @C=7.7pc {
\SS \ar@{-->}[rd]^{F} \\
\strut   \sub \ar@{^{(}->}[u]^{\mathsf{Id}_{\sub\to \SS}} \ar[r]^{f} & \TT
}
$$
\caption{\em \small $\ee(\SS;\TT)$ is the infimal $K\in [1,\infty]$ such that for every $\sub\subset \SS$, every $L\ge 0$ and every $L$-Lipschitz function $f:\sub\to \TT$ there exists a $KL$-Lipschitz function $F:\SS\to \TT$ for which the above diagram commutes, where $\mathsf{Id}_{\sub\to \SS}:\sub\to \SS$ is the formal inclusion. }\label{fig:commuting}
\end{minipage}
}
\end{figure}

The present work is part of a development of a research direction that was proposed in~\cite{CNR25} (see specifically Question~26 there and the discussion surrounding it), which we will next briefly recall.  The overarching challenge is to characterize for any source metric space $(\SS,d_\SS)$ the class $T(\SS)$ of all those target metric spaces $(\TT,d_\TT)$ for which $\ee(\SS,\TT)<\infty$. For example, it was asked in~\cite{CNR25} if given two Banach spaces  $\bX,\bY$, the equality  $T(\bX)=T(\bY)$ implies that $\bX$ and $\bY$ are bi-Lipschitz equivalent; this interesting possibility remains open. Also, \cite[Question~26]{CNR25} asks to characterise the class  $T(\ell_2)$.  Ball asked~\cite{Bal92} whether $L_1\in T(\ell_2)$; our investigations herein show that the answer is positive.\footnote{Throughout, $L_p$ stands for the corresponding Lebesgue space on the interval $[0,1]$, equipped with Lebesgue measure. }  One can also naturally consider the ``dual'' class $S(\TT)$ of a metric space $(\TT,d_\TT)$, consisting of all those  source metric spaces $(\SS,d_\SS)$ for which $\ee(\SS,\TT)<\infty$.  We have been developing the aforementioned direction by pursuing the (open ended) project to understand ``algebraic'' properties of $S(\TT)$ and $T(\SS)$, notably which metric space operations preserve those classes. There is a variety of statements that could be obtained in this context, starting with closure under certain product and limiting operations. We postpone such a systematic study to a future publication (it is mostly straightforward, but there is a lot to say), and highlight herein   only one of its components. Our main contribution is Definition~\ref{def:de holder} below, which is a notion of factorization of Lipschitz functions that is ``right'' in the sense that it permits factoring through maps that are available in the nonlinear setting, but not present in the  theory of factorization of linear operators~\cite{Mau75,Pis86,DJT95}; the latter is a major source of inspiration, through the Ribe program~\cite{Bou86,Kal08,Nao12-ribe,Bal13,Ost13,God17,Nao18,BE26}, for research on Lipschitz extension. With Definition~\ref{def:de holder} at hand, the reasoning herein is abstract, structural, and  quite ``soft.'' Nevertheless, it leads to progress on interesting analytic issues, as we will explain below.  

Throughout what follows, all metric spaces are tacitly assumed to be separable and complete (separability can be dropped but assumed for convenience, as this is the setting of all of our applications). Completeness implies that for computing $\ee(\SS;\TT)$ it suffices to consider only closed subsets $\sub$ of $\SS$, as  one can  extend any Lipschitz function to the closure of its domain without increasing the Lipschitz constant. 

\begin{definition}\label{def:de holder} Suppose that $(\MM,d_\MM), (\NN,d_\NN), (\cZ,d_\cZ)$ are metric spaces and $f:\MM\to \NN$. Given  $0<\theta\le 1$ and $K\ge 1$, we say that $f$ admits a $K$-de-$\theta$-H\"oldering factorization through  $\cZ$ if there exist $g:\MM\to \cZ$ and $h:\cZ\to \NN$ with $f=h\circ g$, as well as $\alpha,\beta>0$ with $\alpha\beta\le K$, such that $g$ is $\theta$-H\"older with constant $\alpha$, namely, 
\begin{equation}\label{eq:theta holder}
\forall x,y\in \MM,\qquad d_\cZ\big(g(x),g(y)\big)\le \alpha d_\MM(x,y)^{\theta},
\end{equation}
and $h$ satisfies the following requirement for every $z,w\in \cZ$:
\begin{equation}\label{eq:1/theta}
 d_\NN\big(h(z),h(w)\big)\le \beta^\frac{1}{\theta}\left\{\begin{array}{ll} d_\cZ(z,w)^{\frac{1}{\theta}} &\mathrm{if\ } d_\cZ(z,w)\ge d_\cZ\big(\{z,w\},g(\MM)\big),\\ d_\cZ\big(\{z,w\},g(\MM)\big)^{\frac{1}{\theta}-1}d_\cZ(z,w)&\mathrm{if\ }d_\cZ(z,w)\le d_\cZ\big(\{z,w\},g(\MM)\big).  \end{array}\right.
\end{equation}

We say that $f$ admits a de-H\"oldering factorization through $\cZ$ if there are some $0<\theta\le 1$ and $K\ge 1$ such that $f$ admits a  $K$-de-$\theta$-H\"oldering factorization through  $\cZ$.
\end{definition}

\begin{figure}[h]
\centering
\fbox{
\begin{minipage}{6.25in}
\[
\xymatrix@C=7.7pc@R=3.2pc{
& \cZ \ar@{-->}[dr]^{h} & \\
\MM \ar[rr]^{f} \ar@{-->}[ur]^{g} && \NN
}
\]
\caption{\em\small $f$ admits a de-H\"oldering factorization through $\cZ$ if there exist $g:\MM\to \cZ$ and $h:\cZ\to \NN$ for which the above diagram commutes, as well as $0<\theta\le 1$ and $\alpha,\beta>0$ such that $g$ satisfies the $\theta$-H\"older condition~\eqref{eq:theta holder} and  $h$ satisfies~\eqref{eq:1/theta}, whence it is in particular  $(1/\theta)$-H\"older on pairs of points in $\cZ$ whose distance is large relative to their distance to $g(\MM)\subset \cZ$. }
\label{fig:factorization}
\end{minipage}
}
\end{figure}

Observe that in the setting of Definition~\ref{def:de holder}, by combining the requirement $f=h\circ g$ with~\eqref{eq:theta holder} and the case $z,w\in g(\MM)$ of~\eqref{eq:1/theta}, one sees that $f$ must be Lipschitz; in fact, its Lipschitz constant is at most $(\alpha\beta)^{1/\theta}$. Thus, we are interested in factoring a Lipschitz function using a function $g$ that is $\theta$-H\"older. So, the role of the function $h$ is to ``de-H\"older'' the effect of $g$. For that goal, it is initially tempting to require that  $h$ is H\"older with exponent $1/\theta$, but if $\cZ$  has many rectifiable curves (Definition~\ref{def:de holder} will be used herein for such spaces, notably Banach spaces), then this is a severe restriction, as $h$ is constant on any such curve if $\theta<1$. Thus, \eqref{eq:1/theta} asks for $h$ to  have $(1/\theta)$-H\"older behavior only on pairs $w,z\in \cZ$ whose distance is sufficiently large relative to their distance from the image $g(\MM)$; for the rest of $z,w\in \cZ$, the requirement~\eqref{eq:1/theta} is Lipschitz behavior whose quality may deteriorate as the distance between $\{x,y\}$ and $g(\MM)$ grows. 

The above notion of factorization is well behaved under various operations, examples of which will be demonstrated herein via reasoning that is by and large  straightforward applications of its definition. One such permanence property that is worthwhile noting at this juncture is the following lemma whose proof, which appears in Section~\ref{sec:poitwise}, is a direct point-wise implementation of Definition~\ref{def:de holder}:

\begin{lemma}\label{lem:point-wise} Fix $0<\theta\le 1$ and $K,p\ge 1$, as well as a finite measure space $(\Omega,\mu)$. Let $(\MM,d_\MM), (\NN,d_\NN), (\cZ,d_\cZ)$  be metric spaces. Suppose that  $f:\MM\to \NN$  admits a $K$-de-$\theta$-H\"oldering factorization through  $\cZ$. Define $F:L_p(\mu;\MM)\to L_p(\mu;\NN)$ by setting $F(\psi)=f\circ \psi$ for every $\psi\in L_p(\mu;\MM)$.  Then, $F$ admits a $2K$-de-$\theta$-H\"oldering factorization through  $L_{p/\theta}(\mu;\cZ)$.\footnote{For a metric space $(\MM,d_\MM)$ and a finite measure space $(\Omega,\mu)$, the space $L_p(\mu;\MM)$ consists of all the equivalence classes (modulo almost-everywhere equality) of $\mu$-to-Borel measurable functions $f:\Omega\to \MM$ with $\int_\Omega d_\MM(f(\omega),x_0)^p\ud \mu(\omega)<\infty$ for some  $x_0\in \MM$  (hence, this holds for  all $x_0\in \MM$, by the triangle inequality and the finiteness of $\mu$). It is straightforward to obtain a version of Lemma~\ref{lem:point-wise} for measures that are not finite, but we prefer to omit this because then the spaces do not depend on $x_0$. }
\end{lemma}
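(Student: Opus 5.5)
The plan is to factor $F$ pointwise: set $G(\psi)\eqdef g\circ\psi$ and $H(\zeta)\eqdef h\circ \zeta$ for $\psi\in L_p(\mu;\MM)$ and $\zeta\in L_{p/\theta}(\mu;\cZ)$. Then $F=H\circ G$ trivially, and I would show that $G$ is $\theta$-H\"older with constant $\alpha$ and that $H$ satisfies~\eqref{eq:1/theta} with $\beta$ replaced by $2\beta$. This gives the constant $\alpha\cdot 2\beta\le 2K$.

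\emph{Step 1 ($G$).} Integrating~\eqref{eq:theta holder} pointwise gives
\[
\int_\Omega d_\cZ\big(g(\psi(\omega)),g(\varphi(\omega))\big)^{p/\theta}\ud\mu(\omega)\le \alpha^{p/\theta}\int_\Omega d_\MM\big(\psi(\omega),\varphi(\omega)\big)^p\ud\mu(\omega).
\]
Taking $\varphi$ constant shows that $G$ maps into $L_{p/\theta}(\mu;\cZ)$. Raising to the power $\theta/p$ gives $\|G(\psi)-G(\varphi)\|\le \alpha\, d(\psi,\varphi)^\theta$. Measurability of $g\circ\psi$ holds because $g$ is continuous.

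\emph{Step 2 (pointwise bound for $H$).} Fix $\zeta,\xi\in L_{p/\theta}(\mu;\cZ)$ and write $a(\omega)=d_\cZ(\zeta(\omega),\xi(\omega))$ and $r(\omega)=d_\cZ(\{\zeta(\omega),\xi(\omega)\},g(\MM))$. Both cases of~\eqref{eq:1/theta} are dominated by the single bound
\[
d_\NN\big(h(\zeta(\omega)),h(\xi(\omega))\big)\le \beta^{1/\theta}\max\{a(\omega),r(\omega)\}^{\frac1\theta-1}a(\omega).
\]
Raise this to the power $p$ and apply H\"older's inequality with exponents $\frac{1}{1-\theta}$ and $\frac1\theta$ (for $\theta<1$; the case $\theta=1$ is immediate). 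This yields
\[
d_{L_p}(H\zeta,H\xi)\le \beta^{1/\theta}\,\|\max\{a,r\}\|_{L_{p/\theta}}^{\frac1\theta-1}\,\|a\|_{L_{p/\theta}} \le \beta^{1/\theta}(A+\|r\|_{L_{p/\theta}})^{\frac1\theta-1}A,
\]
where $A=\|a\|_{L_{p/\theta}}=d(\zeta,\xi)$.

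\emph{Step 3 (local versus global distance to the image).} This is the key step. Let $R=d(\{\zeta,\xi\},G(L_p(\mu;\MM)))$. For every $\psi$ we have pointwise $r(\omega)\le d_\cZ(\zeta(\omega),g(\psi(\omega)))$, and likewise with $\xi$ in place of $\zeta$. Taking $L_{p/\theta}$ norms and then the infimum over $\psi$ gives $\|r\|_{L_{p/\theta}}\le R$. Now split into the two cases of~\eqref{eq:1/theta}. If $A\ge R$, the bound is at most $\beta^{1/\theta}(2A)^{\frac1\theta-1}A\le (2\beta)^{1/\theta}A^{1/\theta}$. If $A\le R$, it is at most $\beta^{1/\theta}(2R)^{\frac1\theta-1}A\le (2\beta)^{1/\theta}R^{\frac1\theta-1}A$. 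This is exactly~\eqref{eq:1/theta} with $2\beta$.

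\emph{Well-definedness of $H$.} I also need $H$ to map into $L_p(\mu;\NN)$ and $h\circ\zeta$ to be measurable. Condition~\eqref{eq:1/theta} makes $h$ continuous: it is locally Lipschitz off the closure of $g(\MM)$, and H\"older-controlled near it. Hence $h\circ\zeta$ is measurable. Integrability follows by applying the Step 2 bound with $\xi\equiv g(x_0)$, which lies in $L_{p/\theta}$ because $\mu$ is finite. I expect this bookkeeping, together with the comparison $\|r\|\le R$, to be the only delicate points.
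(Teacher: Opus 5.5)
Your proposal is correct and follows essentially the paper's route: pointwise composition, H\"older's inequality on the cross term, and the comparison of the $L_{p/\theta}$-norm of the pointwise distance to $g(\MM)$ with the distance to $G(L_p(\mu;\MM))$. The only cosmetic difference is that you write the condition as $\beta^{1/\theta}\max\{a,r\}^{1/\theta-1}a$ and apply Minkowski after H\"older, which yields the factor $2^{1-\theta}$. The paper instead bounds the maximum by a sum before applying H\"older and obtains $2^{\theta/p}$; both factors are at most $2$.
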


Another source of de-H\"oldering factorizations  comes from Proposition~\ref{prop:retract} below, whose proof, which appears in Section~\ref{sec:retract}, relies on a direct adaptation of how McShane classically proved~\cite{McS34} the nonlinear Hahn--Banach theorem; see also~\cite[Chapter~1]{BL00}.  

\begin{prop}\label{prop:retract} Fix $0<\theta\le 1$ and $\otherrho,D\ge 1$. Suppose that $(\MM,d_\MM)$ is an absolute $\otherrho$-Lipschitz retract such that its $\theta$-snowflake $(\MM,d_\MM^\theta)$ embeds with distortion $D$  into some metric space $(\cZ,d_\cZ)$. Then, the identity mapping $\Id_\MM:\MM\to \MM$ admits a $O( D\r^\theta)$-de-$\theta$-H\"oldering factorization through  $\cZ$.
\end{prop}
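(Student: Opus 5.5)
We take $g:\MM\to\cZ$ to be the distortion-$D$ embedding of the $\theta$-snowflake. After rescaling we may assume
$$d_\MM(x,y)^\theta\le d_\cZ\big(g(x),g(y)\big)\le D\,d_\MM(x,y)^\theta,$$
so \eqref{eq:theta holder} holds with $\alpha=D$. We then construct $h:\cZ\to\MM$ in two steps, following McShane.

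\emph{Step 1: a $\theta$-H\"older-type extension into a Banach space.} Embed $\MM$ isometrically into $\ell_\infty$, or into $\ell_\infty(\MM)$ via Fr\'echet/Kuratowski, which separability permits. On $g(\MM)\subset\cZ$ consider the map $\phi=g^{-1}:g(\MM)\to\MM\subset\ell_\infty$. It satisfies
$$\|\phi(a)-\phi(b)\|\le d_\cZ(a,b)^{1/\theta}.$$
Extend it coordinatewise by a McShane-type infimal formula adapted to the exponent $1/\theta$. For each coordinate functional $u=\phi_j$ (which is $1$-Lipschitz on $\MM$), set
$$H_j(z)=\inf_{x\in\MM}\Big\{u(x)+\min\big\{d_\cZ(z,g(x))^{1/\theta},\ \lambda(z)^{\frac1\theta-1}d_\cZ(z,g(x))\big\}\Big\},$$
or more simply
$$H_j(z)=\inf_{x}\big\{u(x)+C\,(d_\cZ(z,g(x))+\lambda(z))^{\frac1\theta-1}d_\cZ(z,g(x))\big\},$$
where $\lambda(z)=d_\cZ(z,g(\MM))$. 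The second choice interpolates between the two regimes in \eqref{eq:1/theta}.

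Two things must be checked. First, on $g(\MM)$ we get $H_j=u\circ g^{-1}$. This uses $u(y)-u(x)\le d_\MM(x,y)\le d_\cZ(g(x),g(y))^{1/\theta}$ together with $\lambda=0$ there. Second, $H=(H_j)_j$ obeys \eqref{eq:1/theta} with constant $O(1)$. For the second check, fix $z,w$ and set $r=d_\cZ(z,w)$ and $s=\min\{\lambda(z),\lambda(w)\}$. We compare the kernels $k_z(x)=(d(z,g(x))+\lambda(z))^{1/\theta-1}d(z,g(x))$ and $k_w(x)$, using the elementary inequality $|(a+b)^{q}-(a'+b')^q|\lesssim q\,(\max)^{q-1}(|a-a'|+|b-b'|)$ with $q=1/\theta$, and the fact that $\lambda$ is $1$-Lipschitz. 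The infimum may be restricted to $x$ with $d(z,g(x))\lesssim \lambda(z)+r$, since far-away $x$ cannot be near-minimizers. This is again McShane's trick: compare with a nearest point $g(x_0)$ and use the lower bound $u(x)\ge u(x_0)-d(g(x),g(x_0))^{1/\theta}$. Under this restriction, $|k_z(x)-k_w(x)|\lesssim (s+r)^{1/\theta-1}r$, which gives both cases of \eqref{eq:1/theta}. The constants depend on $\theta$ only through $(1/\theta)$-powers, so they appear as $O(1)^{1/\theta}$ and thus contribute $O(1)$ to $\beta$.

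\emph{Step 2: retraction.} The absolute $\r$-Lipschitz retract property supplies a $\r$-Lipschitz retraction $R$ from the closed span of $\MM$ in $\ell_\infty$ onto $\MM$. Set $h=R\circ H$. Then $h\circ g=R\circ\phi\circ g=\Id_\MM$, and $h$ satisfies \eqref{eq:1/theta} with $\beta^{1/\theta}=O(1)^{1/\theta}\r$, that is, $\beta=O(\r^\theta)$. Hence $\alpha\beta=O(D\r^\theta)$, as claimed.

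The main obstacle is the verification in Step 1. One must choose the kernel so that it is exactly compatible with the H\"older data on $g(\MM)$; using the lower snowflake bound there is essential so that the infimum is attained at $x$ itself. One must also handle the mixed regime, where $r$ is comparable to $s$, uniformly. If the single interpolating kernel fails, the fallback is to define $H$ with the minimum of the two kernels and treat the cases $r\ge s$ and $r\le s$ separately.
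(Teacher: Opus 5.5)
Your construction is the paper's. You take $g$ to be the rescaled snowflake embedding, place $\MM$ isometrically in $\ell_\infty$, extend each coordinate of $\cj\circ g^{-1}$ from $g(\MM)$ by a McShane-type infimal convolution with exponent $1/\theta$ (localize near-minimizers, then apply a mean value estimate), and finish with an $\r$-Lipschitz retraction.

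Your kernel $C\bigl(d_\cZ(z,g(x))+\lambda(z)\bigr)^{1/\theta-1}d_\cZ(z,g(x))$ is only a cosmetic variant of the paper's $2^{1/\theta-1}d_\cZ(z,g(x))^{1/\theta}$. Since $d_\cZ(z,g(x))\ge \lambda(z)$, it is comparable to $C\,d_\cZ(z,g(x))^{1/\theta}$ up to a factor $2^{1/\theta-1}$. With $C=2$ it localizes cleanly. Writing $d=d_\cZ(z,g(x))$ and letting $g(x_0)$ be an almost nearest point, one has $u(x)+k_z(x)\ge u(x_0)+\bigl(d+\lambda(z)\bigr)^{1/\theta-1}\bigl(d-\lambda(z)\bigr)$ up to a vanishing error. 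This confines near-minimizers to $d<4\lambda(z)$ uniformly in $\theta$, and gives the $e^{O(1/\theta)}$ constants you claim, hence $\beta=O(\r^\theta)$.

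Three points need correcting.

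First, $R$ must be a retraction of all of $\ell_\infty$ onto $\cj(\MM)$, which the absolute retract hypothesis provides directly. $H(z)$ is merely some point of $\ell_\infty$ and need not lie in the closed linear span of $\cj(\MM)$, so $R\circ H$ as you wrote it may be undefined.

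Second, you must take $C>1$, since the localization step uses it. For general $1/\theta$-H\"older data, $C=1$ can even make the infimum $-\infty$. In the star example of Remark~\ref{rem:star}, at the point $x$ there, your kernel with $C=1$ gives $f(z_n)+(n+1)^{1/\theta-1}n=-(n+1)^{1/\theta-1}\to-\infty$.

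Third, discard the ``min'' kernel and the proposed fallback. Because $d_\cZ(z,g(x))\ge\lambda(z)$, that minimum always equals $\lambda(z)^{1/\theta-1}d_\cZ(z,g(x))$, which vanishes on $g(\MM)$ when $\theta<1$. It therefore produces $H_j(g(y))=\inf_{\MM}u$ instead of $u(y)$.
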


For Proposition~\ref{prop:retract}, one says that a metric space $(\MM,d_\MM)$ embeds with distortion $D\ge 1$ into a metric space $(\cZ,d_\cZ)$  if there exist $f:\MM\to \cZ$ and $\upsigma>0$ such that $\upsigma d_\MM(x,y)\le d_\cZ(f(x),f(y))\le D\upsigma d_\MM(x,y)$ for every $x,y\in \MM$. A metric space $(\MM,d_\MM)$ is a $\r$-absolute Lipschitz retract for $\r\ge 1$ if for every metric space $(\SS,d_\SS)$ and an isometric embedding $j:\MM\to \SS$,  there is a $\r$-Lipschitz retraction from $\SS$ onto $j(\MM)$. Absolute Lipschitz retracts are a substantial class of metric spaces; see e.g.~\cite{AP56,Lin64,PY95,LS05,Lan13}, as well as the exposition in~\cite[Chapter~1]{BL00}.  The real line (in fact, any complete metric tree~\cite{Dre84}) is a $1$-absolute Lipschitz retract. By~\cite{Sch37} the metric space $(\R,|\cdot-\cdot|^\theta)$ embeds isometrically into $\ell_2$ for any $0<\theta\le 1$ (and, by~\cite[Proposition~3 and Proposition~9]{NR26} the metric space $(\R,|\cdot-\cdot|^\theta)$ embeds with distortion $O(1)$ into $\X^3$ for any normed space $(\X,\|\cdot\|_\X)$ satisfying $\dim \X\ge C/\theta$ for a suitable universal constant $C\ge 1$). As $\ell_2$ embeds isometrically into $L_q$ for every $1\le q\le \infty$,  also $(\R,|\cdot-\cdot|^\theta)$ embeds isometrically into $L_q$.  An application of Proposition~\ref{prop:retract} now shows that for every $1\le p\le q$ the identity mapping on $\R$  admits a $O(1)$-de-$(p/q)$-H\"oldering factorization through $L_q$, whence by Lemma~\ref{lem:point-wise} the identity mapping on $L_p$ admits a $O(1)$-de-$(p/q)$-H\"oldering factorization through $L_q(L_q)\cong L_q$. We thus get:

\begin{corollary}\label{coro:Lp case} If $1\le p\le q<\infty$, then $\Id_{L_p}$ admits a $O(1)$-de-$\frac{p}{q}$-H\"oldering factorization through $L_q$.
\end{corollary}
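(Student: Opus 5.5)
The corollary follows by chaining Proposition~\ref{prop:retract} with Lemma~\ref{lem:point-wise}, as sketched just before its statement. Set $\theta=p/q\in(0,1]$.

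\emph{Step 1: a factorization for the real line.} The real line $\MM=\R$ is a $1$-absolute Lipschitz retract, so we may take $\r=1$. By Schoenberg~\cite{Sch37}, the $\theta$-snowflake $(\R,|\cdot-\cdot|^\theta)$ embeds isometrically into $\ell_2$. Since $\ell_2$ embeds isometrically into $L_q$ (for instance via i.i.d.\ Gaussians), this snowflake embeds into $\cZ=L_q$ with distortion $D=1$. Proposition~\ref{prop:retract} then shows that $\Id_\R$ admits an $O(1)$-de-$\theta$-H\"oldering factorization through $L_q$.

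\emph{Step 2: passing to $L_p$.} Apply Lemma~\ref{lem:point-wise} with $(\Omega,\mu)=[0,1]$ with Lebesgue measure, $\MM=\NN=\R$, $f=\Id_\R$ and $\cZ=L_q$. The induced map $F(\psi)=\psi$ is the identity on $L_p(\mu;\R)=L_p$. By the lemma, it admits a $2\cdot O(1)=O(1)$-de-$\theta$-H\"oldering factorization through
\[
L_{p/\theta}(\mu;L_q)=L_q([0,1];L_q([0,1])).
\]

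\emph{Step 3: identifying the target space.} By Fubini, $L_q([0,1];L_q([0,1]))$ is isometric to $L_q([0,1]^2)$, which is isometric to $L_q$ via a measure-preserving isomorphism $[0,1]^2\cong[0,1]$. Both conditions \eqref{eq:theta holder} and \eqref{eq:1/theta} are stated purely in terms of distances and distances to the image of $g$. So composing $g$ with this isometry $\iota$, and $h$ with $\iota^{-1}$ restricted to the image, preserves the constants $\alpha$ and $\beta$. This yields the required factorization of $\Id_{L_p}$ through $L_q$.

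\emph{Where care is needed.} There is no real obstacle. The points to check are the following.
\begin{itemize}
\item Proposition~\ref{prop:retract} is applied with $\r^\theta=1$, so its constant is $O(D)=O(1)$.
\item The isometry must be a surjective isometry of metric spaces, so that $h$ is defined on all of $L_q$. This holds because $L_q([0,1]^2)\cong L_q$ is onto.
\item The measure-theoretic issues in Lemma~\ref{lem:point-wise} (Bochner measurability of $L_q$-valued functions) are harmless because $L_q$ is separable for $q<\infty$.
\end{itemize}
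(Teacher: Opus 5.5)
Your proposal is correct and is essentially the paper's own argument: apply Proposition~\ref{prop:retract} to $\R$, which is a $1$-absolute Lipschitz retract whose $\theta$-snowflake embeds isometrically into $L_q$ via Schoenberg and $\ell_2\hookrightarrow L_q$; then apply Lemma~\ref{lem:point-wise} with $p/\theta=q$ and identify $L_q(L_q)\cong L_q$. You also point out that this identification must be an onto isometry, so that the de-H\"oldering map is defined on all of $L_q$ and distances to the image of $g$ are preserved; the paper leaves this implicit, and you handle it correctly.
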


Proposition~\ref{prop:def to ext}  below is a  link between Definition~\ref{def:de holder} and the aforementioned discussion on permanence properties of the metric space class $S(\cdot)$. It uses the notion of metric spaces that have a conical geodesic bicombing, which holds for all Banach spaces and all nonpositively curved spaces in the sense of Busemann  or Alexandrov, so for those who do not wish to work with this abstraction (even though it was beneficial for us to do so for a reason that is discussed below), it is sufficiently meaningful  to consider only Banach spaces (this covers all of the concrete applications herein). 

Following the early investigations~\cite{Bus48,Bus55,Ito79,Bus87,ECHLPT92} and the thorough recent treatments in e.g.~\cite{Lan13,DL15,Bas18}, a conical geodesic bicombing on a metric space $(\cZ,d_\cZ)$ is a function 
\begin{equation}\label{eq:bicombing1}
\gamma:\cZ\times \cZ\times [0,1]\to \cZ,
\end{equation}
 such that for all $x,y\in \cZ$  the path $(t\in [0,1])\mapsto \gamma(x,y,t)$ is a constant speed geodesic joining  $x$ to $y$, namely, 
 \begin{equation}\label{eq:bicombing2}
 \gamma(x,y,0)=x\quad\mathrm{and} \qquad \gamma(x,y,1)=y\qquad \mathrm{and}\qquad  d_\cZ\big(\gamma(x,y,s), \gamma(x,y,t)\big)=|s-t|d_\cZ(x,y),
 \end{equation}
  for every $0\le s,t\le 1$, and such that  the following convexity requirement holds: 
\begin{equation}\label{eq:bicombing3}
\forall x,y,z,w\in \cZ,\ \forall 0\le t\le 1,\qquad d_\cZ\big(\gamma(x,y,t),\gamma(z,w,t)\big)\le (1-t)d_\cZ(x,z)+td_\cZ(y,w).
\end{equation}
Any Banach space $(\bZ,\|\cdot\|_\bZ)$ has a conical geodesic bicombing, as seen by taking $\gamma(x,y,t)=(1-t)x+ty$ for every $x,y\in \bZ$ and $0\le t\le 1$.

\begin{prop}\label{prop:def to ext} Let $(\MM,d_\MM), (\cZ,d_\cZ)$ be metric spaces such that $\cZ$ has a conical geodesic bicombing. If the identity mapping  on $\MM$  admits a de-H\"oldering factorization through $\cZ$, then   $S(\cZ)\subset S(\MM)$. 

Quantitatively, if the identity mapping on $\MM$ admits  a $K$-de-$\theta$-H\"oldering factorization through  $\cZ$ for some $0<\theta\le 1$ and $K\ge 1$, then $\ee(\SS,\MM)^\theta\lesssim K\ee(\SS,\cZ)$ for every metric space $(\SS,d_\SS)$.\footnote{We use the following  asymptotic notation, in addition to  the usual $O(\cdot)$ notation. Given $a,b>0$, by writing
$a\lesssim b$ or $b\gtrsim a$ we mean that $a\le \kappa b$ for some
universal constant $\kappa>0$, and $a\asymp b$
stands for $(a\lesssim b) \wedge  (b\lesssim a)$. }
\end{prop}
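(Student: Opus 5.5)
The plan is to take an $L$-Lipschitz map $f:\sub\to\MM$ with $\sub\subset\SS$ closed, and push it into $\cZ$ via $g$. There we extend it to all of $\SS$ using only Lipschitz extension into $\cZ$ and the bicombing, and then return to $\MM$ via $h$. Fix the factorization $\Id_\MM=h\circ g$ with constants $\alpha,\beta$, $\alpha\beta\le K$, and write $E=\ee(\SS,\cZ)$ and $\f=g\circ f:\sub\to\cZ$. By~\eqref{eq:theta holder}, $\f$ is $\theta$-H\"older with constant $\alpha L^\theta$. The extension of $f$ will be $F=h\circ\Phi$, where $\Phi:\SS\to\cZ$ extends $\f$ and satisfies the following two properties, with $A\eqdef E\alpha L^\theta$, $\delta(x)\eqdef d_\SS(x,\sub)$, and $m(x,y)\eqdef\min\{\delta(x),\delta(y)\}$:
\begin{equation*}
d_\cZ\big(\Phi(x),\Phi(y)\big)\lesssim A\min\Big\{d_\SS(x,y)^\theta,\ d_\SS(x,y)\,m(x,y)^{\theta-1}\Big\},\qquad d_\cZ\big(\Phi(x),g(\MM)\big)\gtrsim \frac{\delta(x)^\theta}{\alpha}\cdot(\text{lower bound, see below}).
\end{equation*}
Actually only the upper bound on $d_\cZ(\Phi(x),\Phi(y))$ and the trivial bound $d_\cZ(\{\Phi(x),\Phi(y)\},g(\MM))\le d_\cZ(\{\Phi(x),\Phi(y)\},\f(\sub))\lesssim A\,m(x,y)^\theta$ are needed. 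The second bound follows from the construction, since $\Phi(x)$ lies within $O(A\delta(x)^\theta)$ of $\f(c)$ for a nearest $c\in\sub$.

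Given such $\Phi$, checking that $F$ is $O(K^{1/\theta}E^{1/\theta})L$-Lipschitz is a two-case computation with~\eqref{eq:1/theta}, writing $r=d_\cZ(\Phi x,\Phi y)$, $\rho=d_\cZ(\{\Phi x,\Phi y\},g(\MM))$, and $d=d_\SS(x,y)$.
\begin{itemize}
\item If $r\ge\rho$, then $d_\MM(Fx,Fy)\le\beta^{1/\theta}r^{1/\theta}\lesssim(\beta A)^{1/\theta}d$, using the H\"older half of the bound on $\Phi$.
\item If $r\le\rho$, then $d_\MM(Fx,Fy)\le\beta^{1/\theta}\rho^{1/\theta-1}r$. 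When $d\le m(x,y)$, the local Lipschitz half gives $\rho^{1/\theta-1}r\lesssim A^{1/\theta}m^{1-\theta}\,d\,m^{\theta-1}$. When $d\ge m(x,y)$, we use $\rho\lesssim A m^\theta\le Ad^\theta$ together with $r\lesssim Ad^\theta$.
\end{itemize}
In both cases $d_\MM(Fx,Fy)\lesssim(\alpha\beta E)^{1/\theta}L$. Since $F|_\sub=h\circ g\circ f=f$, this yields $\ee(\SS,\MM)^\theta\lesssim KE$, and in particular $S(\cZ)\subset S(\MM)$.

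The construction of $\Phi$ is multiscale. For each $k\in\Z$, let $N_k$ be a maximal $2^k$-separated subset of $\sub$. On pairs at distance $\ge 2^k$, the H\"older bound gives $d_\cZ(\f(a),\f(b))\le\alpha L^\theta 2^{k(\theta-1)}d_\SS(a,b)$, so $\f|_{N_k}$ is $\alpha L^\theta2^{k(\theta-1)}$-Lipschitz. By the definition of $E$, it has an extension $\Phi_k:\SS\to\cZ$ that is $A2^{k(\theta-1)}$-Lipschitz. If $\delta(x)\lesssim 2^k$, comparing $\Phi_k(x)$ and $\Phi_{k+1}(x)$ with $\f$ at nearby net points gives
\begin{equation*}
d_\cZ\big(\Phi_k(x),\Phi_{k+1}(x)\big)\lesssim A2^{k\theta}\quad\text{and}\quad d_\cZ\big(\Phi_k(x),\f(c)\big)\lesssim A2^{k\theta}\ \text{for } c\in\sub \text{ with } d_\SS(x,c)\lesssim 2^k.
\end{equation*}
Now glue. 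For $x\notin\sub$, let $k(x)=\lfloor\log_2\delta(x)\rfloor$ and let $s(x)=\delta(x)2^{-k(x)}-1\in[0,1)$, which is $2^{-k}$-Lipschitz on the shell $\{2^k\le\delta<2^{k+1}\}$. Set $\Phi(x)=\gamma\big(\Phi_k(x),\Phi_{k+1}(x),s(x)\big)$, and put $\Phi=\f$ on $\sub$. At $s=1$ this equals $\Phi_{k+1}(x)$, which matches $s=0$ on the next shell, so $\Phi$ is consistent across shells and continuous at $\sub$ by the second display.

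The main obstacle is the local Lipschitz estimate for $\Phi$ within a shell, together with the bookkeeping across shells. For $x,y$ in one shell, the convexity~\eqref{eq:bicombing3} combined with the geodesic property~\eqref{eq:bicombing2} gives
\begin{equation*}
d_\cZ(\Phi x,\Phi y)\le\max_{j\in\{k,k+1\}}d_\cZ(\Phi_jx,\Phi_jy)+|s(x)-s(y)|\,d_\cZ(\Phi_kx,\Phi_{k+1}x)\lesssim A2^{k(\theta-1)}d_\SS(x,y),
\end{equation*}
by the first display. For $x,y$ in different but adjacent shells, I would pass through a point on a near-geodesic between them, or simply use the interpolation bound at the shared scale. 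For $d_\SS(x,y)\gtrsim m(x,y)$, the H\"older bound follows from the triangle inequality through nearest points of $\sub$ and the second display. If $\SS$ is not geodesic, the adjacent-shell case needs care. There I would compare $\Phi(x)$ and $\Phi(y)$ directly with $\Phi_{k+1}$ at both points, using $|\delta(x)-\delta(y)|\le d_\SS(x,y)$.
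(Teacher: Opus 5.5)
Your proposal is correct and follows the paper's proof essentially step for step. The paper also extends $g\circ f$ from dyadic $2^k$-nets of $\sub$ with Lipschitz constants $\ee(\SS;\cZ)\alpha L^\theta 2^{k(\theta-1)}$, glues consecutive scales with the bicombing at parameter $d_\SS(x,\sub)/2^{k(x)}-1$, and then applies the two branches of~\eqref{eq:1/theta} as you do; it packages this construction as Lemma~\ref{lem:two scales} for a general concave modulus $\omega$, which here is $\omega(t)=\alpha L^\theta t^\theta$. Your fallback for adjacent shells, comparing both $\Phi(x)$ and $\Phi(y)$ with $\Phi_{k+1}$, does work: when $\delta(y)<2^{k+1}\le\delta(x)$, both $s(x)$ and $1-s(y)$ are at most $d_\SS(x,y)/2^k$, which is the content of the paper's final case.
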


Proposition~\ref{prop:def to ext} is proved in Section~\ref{sec:deH to ext}. Its elementary reasoning combines steps  in~\cite{Nao01,LN05,Nao24}, so the main contribution herein is the mere realization of the relevance of Definition~\ref{def:de holder}. A secondary conceptual contribution of how we prove Proposition~\ref{prop:def to ext},  is the abstraction of the aforementioned steps.  We performed that abstraction since~\cite{Nao01} relied on an assumption that at the time plausibly held for arbitrary Banach spaces (and, that assumption easily holds in our main applications herein), but subsequently counterexamples to it were found in~\cite{Kal11,Acu26}. Therefore, we were curious to see how to generalize~\cite{Nao01}, and this forced us to find a simpler route that, as a byproduct, settles the following  issue that was left open by~\cite{BS02} and was raised explicitly in e.g.~\cite[Remark~141]{Nao24}.

If $\omega:[0,\infty)\to [0,\infty)$ is nonconstant, nondecreasing, concave, and with $\omega(0)=0$, then for every metric space $(\MM,d_\MM)$ also $(\MM,\omega\circ d_\MM)$ is a metric space,  denoted $\omega\circ \MM$ when the metric $d_\MM$ is clear from the context. It was proved in~\cite{BS02} that $\ee(\omega\circ \MM;\bZ)\lesssim \ee(\MM;\bZ)^2$ and $\ee_l(\omega\circ \MM;\bZ)\lesssim \ee_l(\MM;\bZ)^2$ for every Banach space $(\bZ,\|\cdot\|_\bZ)$. The H\"older case $\omega(t)=t^\theta$ for some $0<\theta<1$ was previously treated in~\cite{Nao01}, where the same quadratic loss is incurred under a mild assumption on $\bZ$. Because the proofs in~\cite{Nao01} and~\cite{BS02} differ substantially from each other  (the reasoning in~\cite{Nao01} is elementary while the proof in~\cite{BS02} interestingly relies on the $K$-divisibility theorem~\cite{BK91}  from interpolation theory), one naturally wonders if the aforementioned  quadratic dependence on $\ee(\MM;\bZ)$ is necessary; the following theorem is a byproduct of our proof of Proposition~\ref{prop:def to ext} , showing that, in fact, a linear estimate holds, in greater generality: 

\begin{theorem}\label{thm:linear BS} Let $(\MM,d_\MM), (\cZ,d_\cZ)$ be metric spaces such that $\cZ$ has a conical geodesic bicombing. Then, $\ee(\omega\circ \MM,\cZ)\lesssim \ee(\MM,\cZ)$ for every $\omega:[0,\infty)\to [0,\infty)$ that is not identically $0$, nondecreasing, continuous,  concave, and satisfies  $\omega(0)=0$. Furthermore, if  $(\bZ,\|\cdot\|_\bZ)$ is a Banach space, then  $\ee_l(\omega\circ \MM,\bZ)\lesssim \ee_l(\MM,\bZ)$.
\end{theorem}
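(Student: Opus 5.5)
The approach is a multiscale Whitney-type construction. At each dyadic scale we discretize $\sub$, use the hypothesis $\ee(\MM,\cZ)$ once per scale, and glue the scales together with the bicombing $\gamma$. Write $e=\ee(\MM,\cZ)$ and fix a closed $\sub\subset\MM$ and $f:\sub\to\cZ$ that is $1$-Lipschitz with respect to $\omega\circ d_\MM$.

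\textbf{Facts about $\omega$.} Since $\omega$ is concave, nondecreasing, not identically $0$ and $\omega(0)=0$, we have $\omega(t)>0$ for $t>0$. Moreover $t\mapsto \omega(t)/t$ is nonincreasing, and $\omega(\lambda t)\le \lambda\omega(t)$ for $\lambda\ge1$. Two consequences will be used repeatedly:
\begin{itemize}
\item if $s\ge r$ then $\omega(s)\le (s/r)\omega(r)$;
\item if $s\le r$ then $(s/r)\omega(r)\le \omega(s)$.
\end{itemize}

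\textbf{Step 1 (one extension per scale).} For each $k\in\Z$ let $r_k=2^k$ and choose a maximal $r_k$-separated subset $\sub_k\subset\sub$. Distinct points of $\sub_k$ are at distance at least $r_k$, so by the first bullet $f|_{\sub_k}$ is $(\omega(r_k)/r_k)$-Lipschitz with respect to $d_\MM$. Extend it to $F_k:\MM\to\cZ$ with $\Lip(F_k)\lesssim e\,\omega(r_k)/r_k$. The basic estimate is the following. If $x\in\MM$ satisfies $d_\MM(x,\sub)\le 4r_k$ and $c\in\sub$ is nearly closest to $x$, pick $c'\in\sub_k$ with $d_\MM(c,c')\le r_k$. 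Then
$$
d_\cZ\big(F_k(x),f(c)\big)\le d_\cZ\big(F_k(x),F_k(c')\big)+d_\cZ\big(f(c'),f(c)\big)\lesssim e\,\frac{\omega(r_k)}{r_k}\cdot r_k+\omega(r_k)\lesssim e\,\omega(r_k).
$$
In particular $d_\cZ(F_k(x),F_{k+1}(x))\lesssim e\,\omega(r_k)$ for such $x$.

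\textbf{Step 2 (gluing the scales).} Set $F=f$ on $\sub$. For $x\notin\sub$ with $r_k\le d_\MM(x,\sub)<r_{k+1}$, put
$$
F(x)=\gamma\big(F_k(x),F_{k+1}(x),t(x)\big),\qquad t(x)=\frac{d_\MM(x,\sub)}{r_k}-1.
$$
This choice is continuous across dyadic boundaries and $t$ is $(1/r_k)$-Lipschitz. The conical inequality~\eqref{eq:bicombing3} together with~\eqref{eq:bicombing2} gives, for $x,y$ in the same dyadic shell,
$$
d_\cZ\big(F(x),F(y)\big)\le \max_{j\in\{k,k+1\}}\Lip(F_j)\,d_\MM(x,y)+|t(x)-t(y)|\,d_\cZ\big(F_k(y),F_{k+1}(y)\big)\lesssim e\,\frac{\omega(r_k)}{r_k}\,d_\MM(x,y).
$$

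\textbf{Step 3 (case analysis for the modulus).} Fix $x,y\in\MM$.
\begin{itemize}
\item \emph{Nearby points.} Suppose $d_\MM(x,y)\le \tfrac12\max\{d_\MM(x,\sub),d_\MM(y,\sub)\}$. Then both points lie in comparable shells. Crossing a boundary is handled by inserting points along the shell structure, or more simply by noting that the formula is continuous and the bound above holds shellwise. Step~2 then gives $d_\cZ(F(x),F(y))\lesssim e\,\omega(r_k)\,d_\MM(x,y)/r_k\lesssim e\,\omega(d_\MM(x,y))$, using the second bullet.
\item \emph{Far-apart points.} Otherwise $d_\MM(x,\sub),d_\MM(y,\sub)\le 2d_\MM(x,y)$. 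Choose near-closest $c_x,c_y\in\sub$, with $c_x=x$ if $x\in\sub$ and similarly for $y$. Step~1 and the convexity of $\gamma$ give $d_\cZ(F(x),f(c_x))\lesssim e\,\omega(d_\MM(x,\sub))\lesssim e\,\omega(d_\MM(x,y))$, and likewise for $y$. Also $d_\cZ(f(c_x),f(c_y))\le\omega(5d_\MM(x,y))\le 5\omega(d_\MM(x,y))$.
\end{itemize}
Together these give $\ee(\omega\circ\MM,\cZ)\lesssim e$.

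\textbf{Linear case.} If $\cZ=\bZ$ is a Banach space and each $F_k$ is chosen by a linear extension operator, then the nets $\sub_k$ and the weights $t(x)$ do not depend on $f$. Hence $F$ is linear in $f$, which yields $\ee_l(\omega\circ\MM,\bZ)\lesssim\ee_l(\MM,\bZ)$.

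\textbf{Main obstacle.} I expect the bookkeeping in Step~3 to be the hardest part. One must control pairs that straddle several dyadic shells, and points near $\sub$ where infinitely many scales accumulate; continuity of $F$ at $\sub$ follows from $\omega(r_k)\to0$. To keep every estimate linear in $e$ rather than quadratic, the factor $e$ may enter only through $\Lip(F_k)$ and never be composed with itself.
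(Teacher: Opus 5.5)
\textbf{Verdict.} Your construction is the paper's: dyadic nets $\sub_k\subset\sub$, one extension $F_k$ per scale with $\Lip(F_k)\lesssim e\,\omega(r_k)/r_k$, and the interpolation $F(x)=\gamma(F_k(x),F_{k+1}(x),d_\MM(x,\sub)/r_k-1)$. Step~1, the same-shell estimate of Step~2, the far-apart case and the linear case are all fine. There is, however, a genuine gap in the case the paper calls the most significant one: nearby points in \emph{adjacent} shells, say
\[
r_k\le d_\MM(y,\sub)<r_{k+1}\le d_\MM(x,\sub)<r_{k+2}.
\]

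\textbf{Why the crossing step fails.} Neither justification you offer works, because $\MM$ is an arbitrary metric space. There need not be any points of $\MM$ between $x$ and $y$; take $\MM=\sub\cup\{x,y\}$ with $x,y$ isolated. So you cannot insert intermediate points of $\MM$. Likewise, continuity of $F$ together with a shellwise Lipschitz bound says nothing about a pair not joined by a path inside $\MM$. As written, Step~3 does not bound $d_\cZ(F(x),F(y))$ for such pairs.

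\textbf{The missing idea.} Insert the intermediate point in $\cZ$ rather than in $\MM$. Use $F_{k+1}(x)$, which is the value either shell's formula would assign to $x$ if $d_\MM(x,\sub)$ equalled $r_{k+1}$, and which also serves as the intermediate point in the paper's three-term split. Write $t(x)=d_\MM(x,\sub)/r_{k+1}-1$ and $t(y)=d_\MM(y,\sub)/r_k-1$. By~\eqref{eq:bicombing2},
\[
d_\cZ\big(F(x),F_{k+1}(x)\big)=t(x)\,d_\cZ\big(F_{k+1}(x),F_{k+2}(x)\big),\qquad d_\cZ\big(F_{k+1}(y),F(y)\big)=\big(1-t(y)\big)\,d_\cZ\big(F_k(y),F_{k+1}(y)\big).
\]
Since $d_\MM(y,\sub)<r_{k+1}\le d_\MM(x,\sub)$, both coefficients are small:
\[
t(x)=\frac{d_\MM(x,\sub)-r_{k+1}}{r_{k+1}}\le\frac{d_\MM(x,y)}{r_k},\qquad 1-t(y)=\frac{r_{k+1}-d_\MM(y,\sub)}{r_k}\le\frac{d_\MM(x,y)}{r_k},
\]
because each numerator is at most $d_\MM(x,\sub)-d_\MM(y,\sub)\le d_\MM(x,y)$.

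\textbf{Completing the estimate.} Apply the triangle inequality through $F_{k+1}(x)$ and $F_{k+1}(y)$. Bound the two outer terms using your Step~1 increment bound $d_\cZ(F_j(z),F_{j+1}(z))\lesssim e\,\omega(r_j)$. Bound the middle term by $d_\cZ(F_{k+1}(x),F_{k+1}(y))\le\Lip(F_{k+1})\,d_\MM(x,y)$. Together these give $d_\cZ(F(x),F(y))\lesssim e\,\omega(r_k)\,d_\MM(x,y)/r_k$, and your second bullet then converts this to $\lesssim e\,\omega(d_\MM(x,y))$. The paper does the same thing with a split through $\Phi_{k+1}(x)$ and $\gamma(\Phi_k(x),\Phi_{k+1}(x),t(y))$. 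With this step supplied, your argument is complete, including the linear case.
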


The following corollary is a substitution of Corollary~\ref{coro:Lp case}  into Proposition~\ref{prop:def to ext}: 

\begin{corollary}\label{cor:extension monotonicity} If $1\le p\le q$, then $S(L_q)\subset S(L_p)$.  In fact, $\ee(\SS;L_p)^{p/q}\lesssim \ee(\SS;L_q)$ for every metric space $(\SS,d_\SS)$.
\end{corollary}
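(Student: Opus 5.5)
This is a direct substitution of Corollary~\ref{coro:Lp case} into Proposition~\ref{prop:def to ext}, with $\MM=L_p$, $\cZ=L_q$ and $\theta=p/q$.

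First, I check the hypotheses of Proposition~\ref{prop:def to ext}. The space $L_q$ is a Banach space, so it has a conical geodesic bicombing, namely $\gamma(x,y,t)=(1-t)x+ty$. The identity on $L_p$ admits a $K$-de-$\theta$-H\"oldering factorization through $L_q$ with $\theta=p/q$ and $K=O(1)$, by Corollary~\ref{coro:Lp case}. That corollary is stated for $q<\infty$, which is the meaningful range. The case $p=q$ is trivial, since the identity factors through itself with $\theta=1$.

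Next, I apply the quantitative part of Proposition~\ref{prop:def to ext}. For every metric space $\SS$ it gives $\ee(\SS,L_p)^{p/q}\lesssim K\,\ee(\SS,L_q)\lesssim \ee(\SS,L_q)$, which is the claimed inequality.

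Finally, the inclusion follows from this inequality. If $\SS\in S(L_q)$, then $\ee(\SS;L_q)<\infty$. Hence $\ee(\SS;L_p)^{p/q}<\infty$, and since $p/q>0$ this gives $\ee(\SS;L_p)<\infty$, i.e. $\SS\in S(L_p)$.

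There is no real obstacle here; all the substantive work is contained in Corollary~\ref{coro:Lp case} and Proposition~\ref{prop:def to ext}. The only point needing care is bookkeeping. Corollary~\ref{coro:Lp case} itself rests on Lemma~\ref{lem:point-wise}, applied with the target $L_{q}(L_q)$, and on identifying $L_q(L_q)$ isometrically with $L_q$. Proposition~\ref{prop:def to ext} is then applied to $\cZ=L_q$ via this isometry. This is legitimate because both the existence of a de-H\"oldering factorization and the quantities $\ee(\SS;\cdot)$ are invariant under isometries of the target.
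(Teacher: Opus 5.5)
Your proposal is correct and matches the paper, which obtains this corollary by substituting Corollary~\ref{coro:Lp case} (with $\theta=p/q$, $K=O(1)$, and $L_q(L_q)\cong L_q$) into Proposition~\ref{prop:def to ext}, using that $L_q$ is a Banach space and therefore has a conical geodesic bicombing. You were also right to restrict to $q<\infty$: for $q=\infty$ the inclusion $S(L_\infty)\subset S(L_p)$ is false, so that case is implicitly excluded.
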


As $\ell_2\in S(\ell_2)$ by~\cite{Kir34}, it follows from Corollary~\ref{cor:extension monotonicity} that $\ell_2\in S(L_1)$, i.e., $\ee(\ell_2;L_1)<\infty$, which is a positive answer to Ball's extension problem~\cite{Bal92}. More generally:

\begin{corollary}\label{thm:endpoint NPSS} For every $1\le p\le 2\le q<\infty$ we have $\ee(L_q;L_p)\lesssim \sqrt{q\min\{\log q, 1/(p-1)\}}\le\sqrt{q\log q}$.
\end{corollary}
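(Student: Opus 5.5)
We will combine Corollary~\ref{cor:extension monotonicity} with a known bound on $\ee(L_q;L_q)$. Corollary~\ref{cor:extension monotonicity} gives $\ee(\SS;L_p)\lesssim \ee(\SS;L_q)^{q/p}$, which loses too much when $p$ is small. So we will instead route the argument through $L_2$ and use Proposition~\ref{prop:def to ext} with a carefully chosen target exponent.

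\textbf{First bound, $\sqrt{q\log q}$.} Apply Corollary~\ref{coro:Lp case} with the pair $(p,2)$, so that $\Id_{L_p}$ admits an $O(1)$-de-$(p/2)$-H\"oldering factorization through $L_2$. Proposition~\ref{prop:def to ext} then yields $\ee(L_q;L_p)^{p/2}\lesssim \ee(L_q;L_2)$. This is poor for $p$ near $1$, since then $p/2\approx 1/2$ and squaring is needed. A better strategy uses the source space directly. For $q\ge 2$, the known bound for $L_2$-valued extension from $L_q$ is $\ee(L_q;L_2)\lesssim \sqrt q$. This comes from Ball's Markov type/cotype extension theorem, with the sharp constant from Naor--Peres--Schramm--Sheffield: $L_q$ has Markov type $2$ with constant $\lesssim\sqrt q$, and $L_2$ has Markov cotype $2$. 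For $p=1$ this only gives $\ee(L_q;L_1)\lesssim (\sqrt q)^2=q$ via Proposition~\ref{prop:def to ext}, which is not the target. So the plan is rather the following. Take $r\in[2,\infty)$ to be chosen, and factor $\Id_{L_p}$ through $L_r$ with exponent $\theta=p/r$ using Corollary~\ref{coro:Lp case}. Then Proposition~\ref{prop:def to ext} gives
\[
\ee(L_q;L_p)\lesssim \bigl(C\,\ee(L_q;L_r)\bigr)^{r/p}.
\]
For this to be useful we need $\ee(L_q;L_r)$ to be $1+O(p/r)$ times something. That is not available, so this route also fails. We therefore go back to making a single application with $\theta=p/2$ efficient.

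\textbf{Main step (the expected obstacle).} The key is to apply Proposition~\ref{prop:def to ext} to the source space $\SS=L_q$ with target $L_2$, but after first snowflaking the source. Theorem~\ref{thm:linear BS} gives $\ee(\omega\circ\SS;\cZ)\lesssim\ee(\SS;\cZ)$ with a universal constant, uniformly in $\omega$. We expect the proof of Proposition~\ref{prop:def to ext} to actually show the inequality in the form
\[
\ee(\SS;\MM)\lesssim K^{1/\theta}\,\ee\bigl((\SS,d_\SS^{\theta});\cZ\bigr)^{1/\theta},
\]
with the snowflake taken on the source. For $p=1$ and $\theta=1/2$ we then need $\ee\bigl((L_q,\|\cdot\|^{1/2});L_2\bigr)\lesssim (q\log q)^{1/4}$. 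Here we invoke the Markov type of the snowflake: $(L_q,\|\cdot\|^{1/2})$ has Markov type $2$ with constant of order $(q\log q)^{1/4}$. This should follow from the Markov type $q$-bound for $L_q$ interpolated with the trivial behaviour of $1/2$-snowflakes, using that a $1/2$-snowflake of a space with Markov type $2$ constant $M$ has Markov type $4$. The Ball extension theorem into $L_2$ then gives the required bound.

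\textbf{Refinement $1/(p-1)$ and assembly.} For $p>1$, we would use instead that $L_p$ for $1<p\le2$ has Markov cotype $2$ with constant $\lesssim 1/\sqrt{p-1}$. Applying Ball's theorem directly with target $L_p$ then gives $\ee(L_q;L_p)\lesssim \sqrt q\cdot(p-1)^{-1/2}$. The stated bound is the minimum of this and the $\sqrt{q\log q}$ estimate. The hardest part is verifying the precise snowflaked Markov type constant in the main step, and making sure the exponent bookkeeping yields $\sqrt{q\log q}$ rather than $q$.
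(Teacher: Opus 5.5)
Your $1/(p-1)$ branch is fine: it is the Naor--Peres--Schramm--Sheffield bound $\ee(L_q;L_r)\lesssim\sqrt{q/(r-1)}$ with $r=p$. But the $\sqrt{q\log q}$ branch is never established, and the route you propose for it cannot work.

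Your ``main step'' rests on $\ee(\SS;\MM)\lesssim K^{1/\theta}\ee\big((\SS,d_\SS^\theta);\cZ\big)^{1/\theta}$. This is not what the proof of Proposition~\ref{prop:def to ext} gives. That proof applies Lemma~\ref{lem:two scales} on the \emph{unsnowflaked} source, and the resulting two-scale bound $d_\cZ(\Gamma(s),\Gamma(t))\lesssim \ee(\SS;\cZ)\alpha\|f\|_{\Lip(\sub;\MM)}^\theta d_\SS(s,t)\big(d_\SS(s,t)+d_\SS(\{s,t\},\sub)\big)^{\theta-1}$ is exactly what controls the second branch of~\eqref{eq:1/theta} when $d_\SS(s,t)\ll d_\SS(\{s,t\},\sub)$. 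A merely $\theta$-H\"older extension only bounds that branch by a multiple of $d_\SS(\{s,t\},\sub)^{\theta(1-\theta)}d_\SS(s,t)^{\theta^2}$, which is not $O(d_\SS(s,t)^\theta)$.

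Worse, the inequality is false. For every metric space $\SS$ one has $\ee\big((\SS,d_\SS^{1/2});L_2\big)=1$ (Minty). Indeed, for a one-point extension to $x$, the balls $B(f(a),d_\SS(x,a)^{1/2})$ satisfy $\|f(a)-f(b)\|_2^2\le d_\SS(x,a)+d_\SS(x,b)$, so they intersect by Kirszbraun's intersection lemma. Combined with Corollary~\ref{coro:Lp case} for the pair $(1,2)$, your inequality would then give $\ee(\SS;L_1)\lesssim 1$ for every $\SS$. That would make $L_1$ an absolute Lipschitz retract, which it is not. For the same reason the snowflake Markov type estimate is beside the point. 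Every $\frac12$-snowflake has Markov type $2$ with constant $1$, since every metric space has Markov type $1$ with constant $1$ by stationarity and the triangle inequality. The $\log q$ you put there has no source.

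The missing idea is in the route you discarded: factor through $L_r$ with $p\le r\le 2$, not $r\ge 2$. Corollary~\ref{cor:extension monotonicity} for the pair $(p,r)$ gives $\ee(L_q;L_p)\lesssim \ee(L_q;L_r)^{r/p}$; the implicit constant is only raised to the power $r/p\le 2$. Then $\ee(L_q;L_r)\lesssim\sqrt{q/(r-1)}$ yields $\ee(L_q;L_p)\lesssim (q/(r-1))^{r/(2p)}$. What matters is not that $\ee(L_q;L_r)$ be ``$1+O(p/r)$ times something.'' What matters is that the exponent $r/p$ exceed $1$ by only $O(1/\log q)$ while $r-1$ is not too small.
\begin{itemize}
\item If $p-1\ge 1/\log q$, take $r=p$ to get $\sqrt{q/(p-1)}$.
\item Otherwise, for $q\ge e$, take $r=1+1/\log q$. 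Then $(q/(r-1))^{r/(2p)}\le (q\log q)^{\frac12+\frac{1}{2\log q}}\le e\sqrt{q\log q}$.
\item For $2\le q<e$, take $r=2$, where everything is $O(1)$.
\end{itemize}
This optimization over $p\le r\le 2$ is the paper's proof.
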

Prior to Corollary~\ref{thm:endpoint NPSS}, it was known~\cite{NPSS06}  that $\ee(L_q;L_p)\lesssim \sqrt{q/(p-1)}$ for every $1<p\le 2\le q<\infty$. The new information  herein is thus that there is no divergence as $p\to 1^+$, though we suspect that the $\log q$ term in  Corollary~\ref{thm:endpoint NPSS} could be removed, i.e., that $\ee(L_q;L_p)\lesssim \sqrt{q}$ for every $1\le p\le 2\le q<\infty$. 

To explain how  Corollary~\ref{thm:endpoint NPSS}  follows from Corollary~\ref{cor:extension monotonicity},  fix $p\le r\le 2$ and apply Corollary~\ref{cor:extension monotonicity} with $\SS=L_q$ to get $\ee(L_q;L_p)\lesssim \ee(L_q;L_r)^{r/p}$. By~\cite{NPSS06} (which relies on Ball's deep extension theorem~\cite{Bal92}) we have  $\ee(L_q;L_r)\lesssim \sqrt{q/(r-1)}$. So, $\ee(L_q;L_p)\lesssim (q/(r-1))^{r/(2p)}$. Corollary~\ref{thm:endpoint NPSS} follows by optimizing over $p\le r\le 2$.

\begin{remark}\label{rem:Mcotype} {\em By a beautiful construction of Kalton~\cite{Kal12} there is a closed linear subspace $\bZ$ of $L_1$ satisfying $\ee(\ell_2;\bZ)=\infty$. We get $\ee(\ell_2;L_1)<\infty$  using  Lemma~\ref{lem:point-wise}, which is quite trivial, but it provides  a de-H\"oldering factorization of the identity on all of $L_1$. Ball's extension theorem~\cite{Bal92} is a different approach to Lipschitz extension which influentially utilizes metric invariants that are inspired by Maurey's extension theorem~\cite{Mau74} for linear operators. It remains an interesting question to determine if the pertinent invariant, namely, metric Markov cotype $2$,  holds for $L_1$; see also the discussion in~\cite{MN13}. We did not yet investigate if the simple method herein could be used to show that $L_1$ has metric Markov cotype $2$, but we will  do that in future work as it would be valuable to recast $\ee(\ell_2;L_1)<\infty$ as part of Ball's theory. } 
\end{remark}

We end this introduction by describing an application of $\ee(\ell_2;L_1)<\infty$ to combinatorics and computer science, which is a black box consequence of $\ee(\ell_2;L_1)<\infty$ thanks to an insightful reduction~\cite{MM16} by Makarychev and Makarychev. For each integer $n\ge 3$, let  $Q^{\mathrm{cut}}_n$ be the infimum over those $Q\ge 1$ such that for every finite weighted graph $(V,w)$, namely, $V$ is a finite set and $w:V\times V\to [0,\infty)$ is a symmetric function, and every $U\subset V$ with $|U|\le n$ there exists a symmetric function $w_U:U\times U\to [0,\infty)$  such that:
\begin{equation}\label{eq:Q quality}
\forall S\subset U,\qquad \min_{\substack{T\subset V\\ T\cap U=S}} w\big(T\times (V\setminus T)\big)\le w_U\big(S\times (U\setminus S)\big)\le Q \min_{\substack{T\subset V\\ T\cap U=S}} w\big(T\times (V\setminus T)\big),
\end{equation}
where we set $w(E)=\sum_{(u,v)\in E} w(u,v)$ for every $E\subset V\times V$. The interpretation of~\eqref{eq:Q quality} is that $w_U$ induces a weighted graph on the vertex set $U$ with the property that for { every}  $S\subset U$ its edge boundary in  $(U,w_U)$ is up to factor $Q$ the  minimum edge boundary of  a cut separating $S$ and $U\setminus S$ in the (potentially much larger) original graph $(V,w)$. If $n$ and $Q$ are small, then this is a valuable tool for reducing the dependence on $n$ in combinatorial optimization problems~\cite{Moi09,LM10,CLLM10,EGKRTT14}. 

The above sparsification notion is due to Moitra~\cite{Moi09}, who proved $Q_n^{\mathrm{cut}}\lesssim (\log n)/\log\log n$, which remained best known despite substantial efforts over the years. By combining~\cite{MM16} with~\cite{CNR25}, we have $Q_n^{\mathrm{cut}} \lesssim \ee(\ell_2;L_1)\sqrt{\log n}$; see equation~(38) in~\cite{CNR25} and the discussion surrounding it. We thus deduce that $Q_n^{\mathrm{cut}} \lesssim \sqrt{\log n}$, which almost matches the best known lower bound $Q_n^{\mathrm{cut}} \gtrsim \sqrt{\log n}/\log\log n$ that was proved in~\cite{MM16}, relying  on~\cite{FJS88}.

\section{Proof of Lemma~\ref{lem:point-wise}}\label{sec:poitwise}  

Fix $g:\MM\to \cZ$ and $h:\cZ\to \NN$ with $h\circ g=f$ such that~\eqref{eq:theta holder} and~\eqref{eq:1/theta} hold for some $\alpha,\beta>0$ satisfying $\alpha\beta\le K$. For every $q>0$, by applying~\eqref{eq:theta holder}  point-wise we get that every $\f,\psi\in L_{q\theta}(\mu;\MM)$ satisfy:
\begin{equation}\label{eq:lifted G}
 d_{L_{q}(\mu;\cZ)}(g\circ \f,g\circ \psi) \le \alpha d_{L_{q\theta}(\mu;\MM)}( \f, \psi)^\theta.  
\end{equation}
Also, a point-wise application of~\eqref{eq:1/theta} gives the following inequality for every $\xi,\zeta\in L_q(\mu;\cZ)$:
\begin{align}\label{eq:de holder point wise}
\begin{split}
d_{L_{q\theta}(\mu;\NN)}(h\circ \xi,h\circ \zeta)^\theta&\le \beta\Big(\int_\Omega \max \big\{d_\cZ(\xi,\zeta),  d_\cZ\big(\{\xi,\zeta\},g(\MM)\big)^{1-\theta}d_\cZ(\xi,\zeta)^\theta\big\}^q\ud \mu\Big)^{\frac{1}{q}}\\&
\le 
\beta \Big(d_{L_q(\mu;\cZ)}(\xi,\zeta)^q+ \int_{\Omega} d_\cZ\big(\{\xi,\zeta\},g(\MM)\big)^{q(1-\theta)}d_\cZ(\xi,\zeta)^{q\theta}\ud \mu\Big)^{\frac{1}{q}}\\
&\le 2^{\frac{1}{q}} \beta\max \bigg\{d_{L_q(\mu;\cZ)}(\xi,\zeta),\Big(\int_{\Omega} d_\cZ\big(\{\xi,\zeta\},g(\MM)\big)^{q}\ud
 \mu\Big)^{\frac{1-\theta}{q}}d_{L_q(\mu;\cZ)}(\xi,\zeta)^\theta\bigg\},
 \end{split}
\end{align}
where the final step of~\eqref{eq:de holder point wise} uses H\"older's inequality. Observe that for every $\phi\in L_{q}(\mu;\cZ)$ we (trivially) have:
$$
d_{L_q(\mu;\cZ)}\big(\phi,g\circ L_{q\theta}(\mu;\MM)\big)\ge \Big(\int_{\Omega} d_\cZ\big(\phi,g(\MM)\big)^{q}\ud
 \mu\Big)^{\frac{1}{q}}.
$$
Consequently,
 $$
 \Big(\int_{\Omega} d_\cZ\big(\{\xi,\zeta\},g(\MM)\big)^{q}\ud
 \mu\Big)^{\frac{1}{q}}\le d_{L_q(\mu;\cZ)}\big(\{\xi,\zeta\},g\circ L_{q\theta}(\mu;\MM)\big),
$$
which together with~\eqref{eq:de holder point wise} gives:
\begin{align}\label{eq:after holder}
d_{L_{q\theta}(\mu;\NN)}(h\circ \xi,h\circ \zeta)^\theta\le 
 2^{\frac{1}{q}}\beta\max \bigg\{d_{L_q(\mu;\cZ)}(\xi,\zeta),d_{L_q(\mu;\cZ)}\big(\{\xi,\zeta\},g\circ L_{q\theta}(\mu;\MM)\big)^{1-\theta}d_{L_q(\mu;\cZ)}(\xi,\zeta)^\theta\bigg\}.
\end{align}

Define $G: L_p(\mu;\MM)\to L_{p/\theta}(\mu;\cZ)$ by  $G(\f)=g\circ \f$ for  $\f\in L_p(\mu;\MM)$.  By~\eqref{eq:lifted G} for $q=p/\theta$ and $\psi$ a constant function,  $G$ indeed takes values in $L_{p/\theta}(\mu;\cZ)$. Also, \eqref{eq:lifted G}  shows that $G$ is $\theta$-H\"older with constant $\alpha$. Similarly, define $H:L_{p/\theta}(\mu;\cZ)\to L_p(\mu;\NN)$ by $H(\xi)=h\circ \xi$ for $\xi\in L_{p/\theta}(\mu;\cZ)$, so by~\eqref{eq:after holder}  $H$ indeed takes values in   $L_p(\mu;\NN)$ and satisfies the following  inequality for every $\xi,\zeta\in L_{p/\theta}(\mu;\cZ)$:
$$
d_{L_{p}(\mu;\NN)}\big(H(\xi),H(\zeta)\big)^\theta\le 
2^{\frac{\theta}{p}} \beta\max \bigg\{d_{L_{\frac{p}{\theta}}(\mu;\cZ)}(\xi,\zeta),d_{L_{\frac{p}{\theta}}(\mu;\cZ)}\Big(\{\xi,\zeta\},G\big( L_{p}(\mu;\MM)\big)\Big)^{1-\theta}d_{L_{\frac{p}{\theta}}(\mu;\cZ)}(\xi,\zeta)^\theta\bigg\}.
$$
Because $F=H\circ G$, where $F$ is as in the statement of Lemma~\ref{lem:point-wise}, and $2^{\theta/p}\le 2$, the proof is complete.  \qed

\section{Proof of Proposition~\ref{prop:retract}}\label{sec:retract}

The proof of the following lemma consists of substituting a ``super H\"older'' assumption into the infimum convolution that McShane used~\cite{McS34} to prove the nonlinear Hahn--Banach theorem.

\begin{lemma}\label{lem:mcshane variant} Fix $q\ge 1$ and $L>0$, a metric space $(\MM,d_\MM)$, and $\emptyset\neq \sub\subset \MM$. Suppose that $f:\sub\to \R$ satisfies: 
\begin{equation}\label{eq:q holder}
\forall x,y\in \sub,\qquad |f(x)-f(y)|\le Ld_\MM(x,y)^q. 
\end{equation}
Then, there exists $F:\MM\to \R$ that extends $f$ and satisfies:
\begin{equation}\label{eq:mcshane conclusion}
\forall x,y\in \MM,\qquad |F(x)-F(y)|\le e^{O(q)}L \max\big\{d_\MM(x,y)^{q},d_\MM(\{x,y\},\sub)^{q-1}d_\MM(x,y)\big\}.
\end{equation} 
\end{lemma}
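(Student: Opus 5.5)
The plan is to run McShane's infimum convolution, but with the ``super H\"older'' kernel $d_\MM(\cdot,c)^q$ multiplied by a large constant $A=e^{O(q)}$; concretely I will take $A=3^q$. That is, define
\begin{equation*}
\forall x\in \MM,\qquad F(x)\eqdef \inf_{c\in \sub}\Big(f(c)+AL\, d_\MM(x,c)^q\Big).
\end{equation*}

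\emph{Step 1: $F$ extends $f$.} For $c'\in\sub$, take $c=c'$ to get $F(c')\le f(c')$. Conversely, \eqref{eq:q holder} gives, for every $c\in\sub$,
\begin{equation*}
f(c)+AL\,d_\MM(c',c)^q\ge f(c')+(A-1)L\,d_\MM(c',c)^q\ge f(c').
\end{equation*}
Hence $F(c')\ge f(c')$, and so $F=f$ on $\sub$. Finiteness of $F$ on all of $\MM$ will come out of Step 2.

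\emph{Step 2: localization of near-minimizers.} This is the key step and the only place where $A$ matters. The obstacle is that \eqref{eq:q holder} with $q>1$ gives the lower bound $f(c)\ge f(c_0)-L\,d_\MM(c,c_0)^q$. With $A=1$ this loss of order $(s+2r)^q$ would swamp the gain $s^q$, so far-away $c$ could not be excluded.

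Fix $x\in\MM$ and set $r=d_\MM(x,\sub)$. Choose $c_0\in\sub$ with $d_\MM(x,c_0)\le 2r$; if $r=0$, take $c_0$ with $d_\MM(x,c_0)$ as small as we like and argue by a limit. Then $F(x)\le f(c_0)+AL(2r)^q$.

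Now let $c\in\sub$ with $s=d_\MM(x,c)\ge 4r$. Then $d_\MM(c,c_0)\le s+2r\le \tfrac32 s$, and so
\begin{equation*}
f(c)+ALs^q\ge f(c_0)+L\big(A-(3/2)^q\big)s^q\ge f(c_0)+L\,4^q\big(A-(3/2)^q\big)r^q.
\end{equation*}
With $A=3^q$ the right-hand side is at least $f(c_0)+AL(2r)^q$, because this reduces to $12^q-6^q\ge 6^q$, which holds for $q\ge1$.

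Consequently the infimum defining $F(x)$ may be restricted to those $c$ with $d_\MM(x,c)\le 4\,d_\MM(x,\sub)$. In particular $F(x)>-\infty$, since on this bounded set $f$ is bounded below by \eqref{eq:q holder}.

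\emph{Step 3: the modulus estimate.} Fix $x,y\in\MM$ and $\varepsilon>0$. By Step 2, pick $c$ with $d_\MM(y,c)\le 4d_\MM(y,\sub)$ and $f(c)+AL\,d_\MM(y,c)^q\le F(y)+\varepsilon$. Write $R=\max\{d_\MM(x,c),d_\MM(y,c)\}$. Using the elementary inequality $a^q-b^q\le q\max\{a,b\}^{q-1}|a-b|$, we get
\begin{equation*}
F(x)-F(y)\le AL\big(d_\MM(x,c)^q-d_\MM(y,c)^q\big)+\varepsilon\le ALq\,R^{q-1}d_\MM(x,y)+\varepsilon.
\end{equation*}
To bound $R$, note that $R\le d_\MM(x,y)+4d_\MM(y,\sub)$. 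Also $d_\MM(y,\sub)\le d_\MM(\{x,y\},\sub)+d_\MM(x,y)$, since if the set distance is attained near $x$ we pay one extra $d_\MM(x,y)$. Therefore
\begin{equation*}
R\le 5\,d_\MM(x,y)+4\,d_\MM(\{x,y\},\sub)\le 9\max\big\{d_\MM(x,y),d_\MM(\{x,y\},\sub)\big\}.
\end{equation*}

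Plugging this in, letting $\varepsilon\to0$, and symmetrizing in $x,y$ gives
\begin{equation*}
|F(x)-F(y)|\le q\,3^q\,9^{q-1}L\,\max\big\{d_\MM(x,y),d_\MM(\{x,y\},\sub)\big\}^{q-1}d_\MM(x,y).
\end{equation*}
The right-hand side equals $e^{O(q)}L\max\{d_\MM(x,y)^q,\ d_\MM(\{x,y\},\sub)^{q-1}d_\MM(x,y)\}$, which is precisely \eqref{eq:mcshane conclusion}.
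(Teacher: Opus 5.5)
Your proof is correct and follows the same route as the paper: a McShane-type infimal convolution $\inf_{c\in\sub}\big(f(c)+A\,d_\MM(x,c)^q\big)$ with a constant $A>1$, localization of (near-)minimizers to distance $O(d_\MM(x,\sub))$ from $x$, and then a mean value estimate. The differences are only in the details. The paper takes $A=2^{q-1}$ and uses a minimizing sequence to bound $\liminf_n d_\MM(x,s_n)\le \eta_q\, d_\MM(x,\sub)$ with $\eta_q\lesssim q/(q-1)$; it also assumes $\sub$ is closed, which makes your $r=0$ limiting step unnecessary. Your larger choice $A=3^q$ instead gives the uniform localization radius $4d_\MM(x,\sub)$ by direct comparison with a near-closest point $c_0$.
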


Assuming Lemma~\ref{lem:mcshane variant} for the moment, the proof of Proposition~\ref{prop:retract} becomes a straightforward unravelling of the definitions and terminology:

\begin{proof}[Deduction of Proposition~\ref{prop:retract} from Lemma~\ref{lem:mcshane variant}] By assumption, we may fix  $g:\MM\to \cZ$ and $\upsigma>0$ such that: 
$$\forall x,y\in \MM,\qquad \upsigma d_\MM(x,y)^\theta\le d_\cZ(g(x),g(y))\le \upsigma Dd_\MM(x,y)^\theta.$$ 
So, $g$ satisfies the first requirement~\eqref{eq:theta holder} of Definition~\ref{def:de holder} with $\alpha=\upsigma D$. 

To obtain the corresponding de-H\"oldering mapping $h:\cZ\to \MM$ from Definition~\ref{def:de holder}, fix an isometric embedding $\cj:\MM\to \ell_\infty$. Then, $\cj\circ g^{-1}:g(\MM)\to \ell_\infty$ satisfies $\|\mathcal{j}\circ g^{-1}(z)-\mathcal{j}\circ g^{-1} (w)\|_\infty\le (d_\cZ(z,w)/\upsigma)^{1/\theta}$ for every $z,w\in g(\MM)\subset \cZ$, i.e., each of the coordinates of $\mathcal{j}\circ g^{-1}$ satisfies the assumption~\eqref{eq:q holder} of Lemma~\ref{eq:q holder}  for $\sub=g(\MM)$ with $q=1/\theta$ and $L=1/\upsigma^{1/\theta}$. By applying Lemma~\ref{eq:q holder} separately to each of the  coordinates of $\mathcal{j}\circ g^{-1}$, we obtain  $\Phi: \cZ\to \ell_\infty$ satisfying $\Phi|_{g(\MM)}=\cj\circ g^{-1}$, and for every $z,w\in \cZ$: 
$$
 \|\Phi(z)-\Phi(w)\|_\infty \le \Big(\frac{O(1)}{\upsigma}\Big)^{\frac{1}{\theta}} \left\{\begin{array}{ll} d_\cZ(z,w)^{\frac{1}{\theta}} &\mathrm{if\ } d_\cZ(z,w)\ge d_\cZ\big(g(\MM),\{z,w\}\big),\\ d_\cZ\big(g(\MM),\{z,w\}\big)^{\frac{1}{\theta}-1}d_\cZ(z,w)&\mathrm{if\ }d_\cZ(z,w)\le d_\cZ\big(g(\MM),\{z,w\}\big).  \end{array}\right.
$$

As $\MM$ is a $\r$-absolute Lipschitz retract we can fix a retraction $R:\ell_\infty \to \cj(\MM)$ from $\ell_\infty$ onto $\cj(\MM)$ that is $\r$-Lipschitz. We may now define $h= \cj^{-1}\circ R\circ \Phi:\cZ\to \MM$, which is allowed since $R$ takes values in $\cj(\MM)$. Also,  $h\circ g=\Id_\MM$, because $R|_{g(\MM)}=\Id_{g(\MM)}$    as $R$ is a retraction, and $\Phi|_{g(\MM)}=\cj\circ g^{-1}$. The desired inequality~\eqref{eq:1/theta} with $\NN=\MM$ of  Definition~\ref{def:de holder} holds with $\beta\lesssim \r^\theta/\upsigma$, whence $\alpha\beta\lesssim  D\r^\theta$, as required. 
\end{proof}

\begin{proof}[Proof of Lemma~\ref{lem:mcshane variant}] By considering $f/L$ we may assume from now that $L=1$. As $f$ extends to the closure of $\sub$ while still satisfying~\eqref{eq:q holder}, we may also assume from now that $\sub$ is closed.  Define:
\begin{equation}\label{eq:def q mcshane}
\forall x\in \MM,\qquad F(x)\eqdef \inf_{s\in \sub} \big(f(s)+2^{q-1}d_\MM(x,s)^q\big). 
\end{equation}
This coincides with the classical McShane extension formula~\cite{McS34} when $q=1$. For $q>1$, one can replace the  $2^{q-1}$ in~\eqref{eq:def q mcshane} by any parameter $K>1$, but $K=1$ does not work here, unlike when $q=1$; see Remark~\ref{rem:star} below.    The optimal choice of $K$ for the ensuing reasoning is not $2^{q-1}$; we chose the value $2^{q-1}$ for simplicity, as this only influences the implicit constant factor of the $O(q)$ in~\eqref{eq:mcshane conclusion}. 

Fix $s_0\in \sub$. For every $s\in \sub$ and $x\in \MM$ we have: 
\begin{align}\label{eq:not b-infty}
\begin{split}
f(s)+2^{q-1}d_\MM(x,s)^q&\stackrel{\eqref{eq:q holder}}{\ge} f(s_0)-d_\MM(s,s_0)^q+2^{q-1}d_\MM(x,s)^q\\&\ge f(s_0)-\big(d_\MM(s,x)+d_\MM(x,s_0)\big)^q+2^{q-1}d_\MM(x,s)^q\ge f(s_0)-2^{q-1}d_\MM(x,s_0)^q,
\end{split}
\end{align}
where in the final step we used that $(u+v)^q\le 2^{q-1}(u^q+v^q)$ holds for every $u,v\in [0,\infty)$. Therefore, the infimum that defines $F(x)$ is over a set that is bounded from below, whence $F(x)$ is indeed real-valued. 

To check that $F$ extends $f$, suppose that $x\in \sub$. Choosing $s=x$ in the right hand side of~\eqref{eq:def q mcshane} shows  that $F(x)\le f(x)$, while for every $s\in \sub$ we have: 
\begin{equation}\label{eq:F extends}
f(x)\stackrel{\eqref{eq:q holder}}{\le} f(s)+d_\MM(x,s)^q\le f(s)+2^{q-1}d_\MM(x,s)^q.
\end{equation}

So, the desired inequality~\eqref{eq:mcshane conclusion} holds if $x,y\in \sub$, as it is weaker than~\eqref{eq:q holder}. We will therefore next proceed to verify~\eqref{eq:mcshane conclusion} under the assumption $\{x,y\}\not\subseteq \sub$, say, $x\in \MM\setminus \sub$. Thus $d_\MM(x,\sub)>0$, as $\sub$ is closed. We will also assume from now that $q>1$, since $q=1$ is the classical (and simple) analysis in~\cite{McS34} of~\eqref{eq:def q mcshane}.  

Fix  $\{s_n=s_n(x)\}_{n=1}^\infty \subset \sub$ that satisfy:
\begin{equation}\label{eq:lim analysis}
F(x)=\lim_{n\to \infty} \big(f(s_n)+2^{q-1}d_\MM(x,s_n)^q\big),
\end{equation}
For every $s\in \sub$ and $n\in \N$ we have:
\begin{align*}
F(x)\stackrel{\eqref{eq:def q mcshane} }{\le} f(s)+2^{q-1}d_\MM(x,s)^q&\stackrel{\eqref{eq:q holder}}{\le} f(s_n)+d_\MM(s,s_n)^q+2^{q-1}d_\MM(x,s)^q\\&\le f(s_n)+ \big(d_\MM(x,s_n)+d_\MM(x,s)\big)^q+2^{q-1}d_\MM(x,s)^q. 
\end{align*}
By taking the infimum over $s\in \sub$ we get:
\begin{equation}\label{eq:for liminf}
\forall n\in \N,\qquad F(x)-f(s_n)\le \big(d_\MM(x,s_n)+d_\MM(x,\sub)\big)^q+2^{q-1}d_\MM(x,\sub)^q.
\end{equation}
Consequently,
\begin{align*}
2^{q-1}\liminf_{n\to\infty}  d_\MM(x,s_n)^q&\stackrel{\eqref{eq:lim analysis}}{\le} \liminf_{n\to\infty} \big(F(x)-f(s_n)\big)\stackrel{\eqref{eq:for liminf}}{\le} \liminf_{n\to\infty} \Big( \big(d_\MM(x,s_n)+d_\MM(x,\sub)\big)^q+2^{q-1}d_\MM(x,\sub)^q\Big)\\&=\big(\liminf_{n\to\infty} d_\MM(x,s_n)+d_\MM(x,\sub)\big)^q+2^{q-1}d_\MM(x,\sub)^q.
\end{align*}
By~\eqref{eq:lim analysis} and~\eqref{eq:for liminf}, and using $2^{q-1}>1$, we see  that the sequence  $\{d_\MM(x,s_n)\}_{n=1}^\infty$ is bounded, so if we denote $\eta=\liminf_{n\to\infty} d_\MM(x,s_n)/d_\MM(x,\sub)$, then $\eta<\infty$ and we obtained the  inequality  $2^{q-1}\eta^q\le (\eta+1)^q+2^{q-1}$. It follows that $\eta\le \eta_q$, where $\eta_q$ is the unique positive number satisfying $2^{q-1}\eta_q^q= (\eta_q+1)^q+2^{q-1}$. It is a simple calculus exercise to check that $\eta_q\lesssim q/(q-1)$. We have thus shown that:
\begin{equation}\label{coercivce bound}
\liminf_{n\to\infty} d_\MM(x,s_n)\lesssim \frac{q}{q-1}d_\MM(x,\sub). 
\end{equation}

Because $s_n\in \sub$ for every $n\in \N$, every $y\in \MM$ satisfies:
\begin{align}\label{eq:Fy-Fx}
\begin{split}
F(y)-F(x)&\stackrel{\eqref{eq:lim analysis}}{=}F(y)-\lim_{n\to \infty}\big(f(s_n)+2^{q-1}d_\MM(x,s_n)^q\big)\\&\stackrel{\eqref{eq:def q mcshane}}{\le} 2^{q-1}\liminf_{n\to\infty} \big(d_\MM(y,s_n)^q-d_\MM(x,s_n)^q\big)\\&\ \le 2^{q-1}\liminf_{n\to\infty}\Big(\big(d_\MM(x,s_n)+d_\MM(x,y)\big)^q-d_\MM(x,s_n)^q\Big)\\&\ \le2^{q-1}q d_\MM(x,y)\liminf_{n\to\infty}
\big(d_\MM(x,s_n)+d_\MM(x,y)\big)^{q-1}\\ 
&\stackrel{\eqref{coercivce bound}}{\lesssim} e^{O(q)} d_\MM(x,y)\big(d_\MM(x,\sub)+d_\MM(x,y)\big)^{q-1}\\
&\ \lesssim e^{O(q)} d_\MM(x,y)\big(d_\MM(\{x,y\},\sub)+d_\MM(x,y)\big)^{q-1}\\
&\ \lesssim e^{O(q)}\max\big\{d_\MM(x,y)^{q},d_\MM(\{x,y\},\sub)^{q-1}d_\MM(x,y)\big\},
\end{split}
\end{align}
where the fourth step of~\eqref{eq:Fy-Fx} is a straightforward application of the mean value theorem to the function $(u\ge 0)\mapsto u^q$, and the penultimate step of~\eqref{eq:Fy-Fx} uses $d_\MM(x,\sub)\le d_\MM(y,\sub)+d_\MM(x,y)$. As we proved~\eqref{eq:Fy-Fx} for $x\in \MM\setminus \sub$ and $y\in \MM$, if also $y\in \MM\setminus \sub$, then its application with the roles of $x,y$ interchanged gives: 
\begin{equation}\label{eq:almost done}
|F(x)-F(y)|\le e^{O(q)}\max\big\{d_\MM(x,y)^{q},d_\MM(\{x,y\},\sub)^{q-1}d_\MM(x,y)\big\}.
\end{equation}
If $y\in \sub$, then $d_\MM(\{x,y\},\sub)=0$ and  $F(x)-F(y)=F(x)-f(y)\le 2^{q-1} d_\MM(x,y)^q$ by~\eqref{eq:def q mcshane} with $s=y$, so  together with~\eqref{eq:Fy-Fx}  the conclusion~\eqref{eq:almost done} holds when $y\in \sub$ as well. This completes the proof of~\eqref{eq:mcshane conclusion}.
\end{proof}

\begin{remark}\label{rem:star} {\em In the context of Lemma~\ref{lem:mcshane variant} while continuing to assume the normalization $L=1$, one can take any $K>1$ and work with the following extension formula in place of~\eqref{eq:def q mcshane}:
\begin{equation}\label{eq:def q mcshane K}
\forall x\in \MM,\qquad F_K(x)\eqdef \inf_{s\in \sub} \big(f(s)+Kd_\MM(x,s)^q\big). 
\end{equation}
An inspection of the proof of Lemma~\ref{lem:mcshane variant} reveals that it could be carried out for any $K>1$, yielding the same inequality as~\eqref{eq:mcshane conclusion} for $F_K$, except that the $e^{O(q)}$ factor is now replaced by  $e^{O(q)} K/(1-K^{-1/q})^{q-1}$, and a more careful analysis yields a more complicated expression which can be optimized over $K>1$ to yield~\eqref{eq:mcshane conclusion} with $e^{O(q)}$ replaced by  $O(q6^q)$; the (quite tedious) details of this are omitted. Thus, our above choice $K=2^{q-1}$ is not best possible,  but it does not materially change~\eqref{eq:mcshane conclusion}. 

Suppose that $q>1$. Note that $F_k$ as in~\eqref{eq:def q mcshane K} extends $f$ for any $K\ge 1$, by reasoning exactly as in~\eqref{eq:F extends}. Any $K>1$ ensures that $F_K(x)$ cannot equal $-\infty$, as seen by the following variant of~\eqref{eq:not b-infty}: 
\begin{align*}
f(s)+Kd_\MM(x,s)^q&\stackrel{\eqref{eq:q holder}}{\ge} f(s_0)-d_\MM(s,s_0)^q+Kd_\MM(x,s)^q\\&\ge f(s_0)-\big(d_\MM(s,x)+d_\MM(x,s_0)\big)^q+Kd_\MM(x,s)^q\ge f(s_0)-\frac{K}{\left(K^{\frac{1}{q-1}}-1\right)^{q-1}}d_\MM(x,s_0)^q,
\end{align*}
where the last step uses the following  application of H\"older's inequality:
$$
\forall u,v\ge 0,\qquad (u+v)^q \le Ku^q+\frac{K}{\left(K^{\frac{1}{q-1}}-1\right)^{q-1}}v^q.
$$
However, taking $K=1$ in~\eqref{eq:def q mcshane K}  could result in $F_1(x)=-\infty$. Indeed, consider $\MM=\{x,y,z_1,z_2,\ldots\}$, 
equipped with the star metric: 
$$
d_\MM(x,y)=1,\quad \mathrm{and}\quad \forall n\in \N,\quad d_\MM(x,z_n)=n=d_\MM(y,z_n)-1,\quad \mathrm{and}\quad \forall k,\ell\in \N,\quad d_\MM(z_k,z_\ell)=(k+\ell)\d_{k\ell}.  
$$
If we take $\sub=\{y,z_1,z_2,\ldots\}$ and define $f:\sub \to \R$ by $f(y)=0$ and $f(z_n)=-(n+1)^q$, then it is straightforward to check that $|f(s)-f(t)|\le d_\MM(s,t)^q$   for every $s,t\in \sub$. But, because $q>1$ we have:
$$
\forall n\in \N,\qquad F_1(x)\le f(z_n)+d_\MM(x,z_n)^q=-(n+1)^q+n^q\xrightarrow[n \to \infty]{}-\infty.
$$   }
\end{remark}

\section{Proof of Proposition~\ref{prop:def to ext} and Theorem~\ref{thm:linear BS} }\label{sec:deH to ext}

We will prove the following general lemma, whose proof is essentially a concatenation of steps that were carried out in~\cite{Nao01,LN05,Nao24} (for similar purposes):

\begin{lemma}\label{lem:two scales} Let $\omega:(0,\infty)\to (0,\infty)$ be nondecreasing and concave, with $\lim_{t\to 0^+}\omega(t)=0$. Fix  metric spaces $(\MM,d_\MM), (\cZ,d_\cZ)$ such that $\cZ$ has a conical geodesic bicombing. If $\sub\subset \MM$ is closed and $f:\sub\to \cZ$ satisfies:
\begin{equation}\label{eq:omega modulus}
\forall x,y\in \sub,\qquad d_\cZ\big(f(x),f(y)\big)\le \omega \big(d_\MM(x,y)\big),
\end{equation}
then there exists $F:\MM\to \cZ$ such that $F(a)=f(a)$ for every $a\in \sub$,  and for every distinct $x,y\in \MM$ we have:
\begin{equation}\label{eq:omega extended}
d_\cZ\big(F(x),F(y)\big)\lesssim \ee(\MM;\cZ)\frac{\omega\left(d_\MM(x,y)+d_\MM(\{x,y\},\sub)\right)}{d_\MM(x,y)+d_\MM(\{x,y\},\sub)}d_\MM(x,y).
\end{equation}
Furthermore, if $\cZ$ is a Banach space, then $F$ can be taken to depend linearly on $f$, with $\ee(\MM;\cZ)$ replaced by $\ee_l(\MM;\cZ)$ in~\eqref{eq:omega extended}.  
\end{lemma}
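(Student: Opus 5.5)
We will prove Lemma~\ref{lem:two scales} by a multiscale gluing argument: at each dyadic scale we extend a Lipschitz approximation of $f$, and then patch the scales together using the bicombing $\gamma$. We may assume $K\eqdef\ee(\MM;\cZ)<\infty$. For $k\in\Z$ set $r_k=2^k$, and let $\sub_k\subset\sub$ be a maximal $r_k$-separated subset (an $r_k$-net of $\sub$). On $\sub_k$ the function $f$ is Lipschitz with constant at most $\omega(r_k)/r_k$ up to a factor $2$. Indeed, if $a,b\in\sub_k$ are distinct then $d_\MM(a,b)\ge r_k$, and concavity of $\omega$ with $\omega(0^+)=0$ makes $t\mapsto \omega(t)/t$ nonincreasing, so $d_\cZ(f(a),f(b))\le \omega(d_\MM(a,b))\le \frac{\omega(r_k)}{r_k}d_\MM(a,b)$. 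By the definition of $K$ we then extend $f|_{\sub_k}$ to $F_k:\MM\to\cZ$ with $\Lip(F_k)\le 2K\omega(r_k)/r_k$. In the Banach case we instead use $\ee_l$, and linearity is preserved by all later steps, since they are pointwise affine combinations.

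Next we compare $F_k$ with $f$ and with neighbouring scales. For $a\in\sub$, pick $a'\in\sub_k$ with $d_\MM(a,a')\le r_k$. Then
\[
d_\cZ(F_k(a),f(a))\le d_\cZ(F_k(a),F_k(a'))+d_\cZ(f(a'),f(a))\lesssim K\omega(r_k),
\]
and consequently, for any $x\in\MM$ within distance $\lesssim r_k$ of $\sub$,
\[
d_\cZ\big(F_k(x),F_{k+1}(x)\big)\lesssim K\omega(r_k).
\]
We now define $F$. Fix a Lipschitz partition of $\MM\setminus\sub$ by the scale function $\rho(x)=d_\MM(x,\sub)$. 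For $x$ with $r_k\le\rho(x)\le r_{k+1}$, set
\[
F(x)=\gamma\big(F_k(x),F_{k+1}(x),t(x)\big),\qquad t(x)=\frac{\rho(x)-r_k}{r_k}.
\]
On $\sub$ we set $F=f$. This definition is consistent at the endpoints $\rho=r_k$, since $t=0$ and $t=1$ give $F_k$ and $F_{k+1}$ respectively. Here the bicombing does the work of the convex combinations that would be used in a Banach space.

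The main step is the Lipschitz estimate~\eqref{eq:omega extended}. We first treat nearby points $x,y$ in the same annulus $r_k\le\rho\le r_{k+1}$. By~\eqref{eq:bicombing2} and~\eqref{eq:bicombing3}, together with the triangle inequality through $\gamma(F_k(y),F_{k+1}(y),t(x))$, we get
\[
d_\cZ(F(x),F(y))\le \max\{\Lip F_k,\Lip F_{k+1}\}\,d_\MM(x,y)+|t(x)-t(y)|\,d_\cZ\big(F_k(y),F_{k+1}(y)\big).
\]
Since $\rho$ is $1$-Lipschitz, $|t(x)-t(y)|\le d_\MM(x,y)/r_k$. Combined with the comparison bound above, this gives $d_\cZ(F(x),F(y))\lesssim K\frac{\omega(r_k)}{r_k}d_\MM(x,y)$, which is~\eqref{eq:omega extended} because $r_k\asymp \rho$ and $\omega(t)/t$ is nonincreasing. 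If $x,y$ lie in different annuli with $d_\MM(x,y)\le\frac12\max\{\rho(x),\rho(y)\}$, we subdivide along the scales; they span only $O(1)$ annuli, or we chain the estimate through points at the boundary levels. The remaining cases are the ones I expect to be the main obstacle. These are when $d_\MM(x,y)\gtrsim d_\MM(\{x,y\},\sub)$, or when one of $x,y$ lies in $\sub$. Here we pass through nearest points $a,b\in\sub$, with $d_\MM(x,a)\le 2\rho(x)$ and similarly for $b$. Using $d_\cZ(F(x),f(a))\lesssim K\omega(\rho(x))$, which follows by summing the comparison bound along the relevant scales, and $d_\cZ(f(a),f(b))\le\omega(d_\MM(a,b))$ with $d_\MM(a,b)\lesssim d_\MM(x,y)$, we obtain $d_\cZ(F(x),F(y))\lesssim K\omega(d_\MM(x,y))$, which matches~\eqref{eq:omega extended} in this regime. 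Finally, continuity of $F$ at $\sub$ follows from $\omega(0^+)=0$. This completes the plan.
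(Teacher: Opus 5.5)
Your construction is the paper's: $2^k$-nets $\sub_k$ of $\sub$, extensions $F_k$ with $\Lip(F_k)\lesssim K\omega(r_k)/r_k$, and $F(x)=\gamma(F_k(x),F_{k+1}(x),\rho(x)/r_k-1)$. Your same-annulus estimate, the case $y\in\sub$, and the regime $d_\MM(x,y)\gtrsim d_\MM(\{x,y\},\sub)$ go through essentially as you indicate, and the last two are in fact the easy cases. The step that does not go through as written is the one you dispatch in a single sentence: $x,y$ in adjacent annuli, say $r_{k+1}\le\rho(x)\le r_{k+2}$ and $r_k\le\rho(y)\le r_{k+1}$, with $d_\MM(x,y)\le\frac12\rho(x)$. ``Chaining through points at the boundary levels'' needs points $z\in\MM$ with $\rho(z)=r_{k+1}$ and $d_\MM(x,z)+d_\MM(z,y)\lesssim d_\MM(x,y)$. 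But $\MM$ is an arbitrary metric space, possibly discrete, so no such $z$ need exist. The same objection applies to subdividing the pair $x,y$ inside $\MM$. The subdivision has to be done in the interpolation parameter at a fixed point, not in $\MM$.

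Concretely, you pass through the virtual value $F_{k+1}(x)=\gamma(F_{k+1}(x),F_{k+2}(x),0)=\gamma(F_k(x),F_{k+1}(x),1)$ and then through $\gamma(F_k(x),F_{k+1}(x),t(y))$. By \eqref{eq:bicombing2} and \eqref{eq:bicombing3} this gives
\begin{align*}
d_\cZ\big(F(x),F(y)\big)&\le t(x)\,d_\cZ\big(F_{k+1}(x),F_{k+2}(x)\big)+\big(1-t(y)\big)\,d_\cZ\big(F_k(x),F_{k+1}(x)\big)\\
&\qquad+\max\{\Lip F_k,\Lip F_{k+1}\}\,d_\MM(x,y).
\end{align*}
The last term bounds $d_\cZ\big(\gamma(F_k(x),F_{k+1}(x),t(y)),\gamma(F_k(y),F_{k+1}(y),t(y))\big)$. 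Here $t(x)=(\rho(x)-r_{k+1})/r_{k+1}$ and $1-t(y)=(r_{k+1}-\rho(y))/r_k$. Both numerators are at most $\rho(x)-\rho(y)\le d_\MM(x,y)$, precisely because $\rho(y)\le r_{k+1}\le\rho(x)$: each point is close to the common boundary level in the parameter, even if no point of $\MM$ lies there. Your comparison bound (applicable since $\rho(x)\le 4r_k$) then gives $d_\cZ(F(x),F(y))\lesssim K\frac{\omega(r_k)}{r_k}d_\MM(x,y)$, as needed.

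A smaller point concerns the bound $d_\cZ(F(x),f(a))\lesssim K\omega(\rho(x))$. It must not come from telescoping over all scales $j\le k$: the sum $\sum_{j\le k}\omega(2^j)$ can diverge for admissible concave $\omega$, e.g.\ $\omega(t)=1/\log(1/t)$ near $0$. It follows directly from your single-scale bound $d_\cZ(F_k(a),f(a))\lesssim K\omega(r_k)$, together with \eqref{eq:bicombing3} applied with $f(a)=\gamma(f(a),f(a),t(x))$.
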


Theorem~\ref{thm:linear BS} is a consequence of Lemma~\ref{lem:two scales} because the fact that $\omega$ is concave with $\omega(0)=0$ implies that the function $(t>0)\mapsto \omega(t)/t$ is nonincreasing, whence  for distinct $x,y\in \MM$ we have:
 $$
\frac{\omega\left(d_\MM(x,y)+d_\MM(\{x,y\},\sub)\right)}{d_\MM(x,y)+d_\MM(\{x,y\},\sub)}\le \frac{\omega\left(d_\MM(x,y)\right)}{d_\MM(x,y)}.
$$ So, \eqref{eq:omega extended}  implies that $F$ has Lipschitz constant $O(\ee(\MM;\cZ))$ when viewed as a function from $\omega\circ \MM$ to $\cZ$.

Prior to proving Lemma~\ref{lem:two scales}, we will next  show how it implies Proposition~\ref{prop:def to ext}:

\begin{proof}[Deduction of Proposition~\ref{prop:def to ext}  from Lemma~\ref{lem:two scales}] Fix $0<\theta\le 1$ and $K\ge 1$. Suppose that  $\Id_\MM:\MM\to \MM$ admits a $K$-de-$\theta$-H\"oldering factorization through  $\cZ$. This means that there are $g:\MM\to \cZ$ and $h:\cZ\to \MM$ with: 
\begin{equation}\label{eq:composition identity}
h\circ g=\Id_\MM,
\end{equation} 
such that the following two conditions hold for some $\alpha>0$: 
\begin{equation}\label{eq:factor}
\left\{\begin{array}{ll}
\forall x,y\in \MM,\qquad d_\cZ\big(g(x),g(y)\big)\le \alpha d_\MM(x,y)^{\theta},\\ \forall z,w\in \cZ,\qquad d_\MM\big(h(z),h(w)\big)^\theta \le \frac{K}{\alpha}  \max \big\{d_\cZ(z,w),  d_\cZ\big(\{z,w\},g(\MM)\big)^{1-\theta}d_\cZ(z,w)^\theta\big\}.\end{array}\right. 
\end{equation}
 Letting $(\SS,d_\SS)$ be a metric space satisfying $\ee(\SS;\cZ)<\infty$, our goal is to deduce that:   
 \begin{equation}\label{eq:restate extension goal}
 \ee(\SS;\MM)^\theta\lesssim K\ee(\SS;\cZ).
 \end{equation} 

Suppose that  $\emptyset\neq \sub\subset \SS$ is closed and that $f:\sub\to \MM$ is Lipschitz. By the first part of~\eqref{eq:factor} we have:
$$
\forall s,t\in \sub,\qquad d_\cZ\big(g\circ f(s),g\circ f(t)\big)\le \alpha \|f\|_{\Lip(\sub;\MM)}^\theta d_\SS(s,t)^\theta. 
$$
Consequently,  Lemma~\ref{lem:two scales} applied to $g\circ f:\sub\to \cZ$, provides $\Gamma:\SS\to \cZ$  that satisfies:
\begin{equation}\label{eq:Gamma composition}
\forall s\in \sub,\qquad \Gamma(s)=g\circ f(s),
\end{equation}
as well as: 
\begin{equation}\label{eq:Gamma}
\forall s,t\in \SS,\qquad  d_\cZ\big(\Gamma(s),\Gamma(t)\big)\lesssim   \frac{\ee(\SS;\cZ)\alpha \|f\|_{\Lip(\sub;\MM)}^\theta d_\SS(s,t) }{\left(d_\SS(s,t)^{\phantom{2}}\!\!\!\!+d_\SS(\{s,t\},\sub)\right)^{1-\theta}}\le \ee(\SS;\cZ)\alpha \|f\|_{\Lip(\sub;\MM)}^\theta d_\SS(s,t)^\theta.
\end{equation}

 Observe that:
$$
\forall s\in \SS,\qquad d_\cZ\big(\Gamma(s),g(\MM)\big)\le  d_\cZ\big(\Gamma(s),g\circ f(\sub)\big)\stackrel{\eqref{eq:Gamma composition}}{=} d_\cZ\big(\Gamma(s),\Gamma(\sub)\big)\stackrel{\eqref{eq:Gamma}}{\lesssim} \ee(\SS;\cZ) \alpha \|f\|_{\Lip(\sub;\MM)}^\theta  d_\SS(s,\sub)^\theta, 
$$
where the first step holds because $f(\sub)\subset \MM$. Consequently:
\begin{equation}\label{eq:st}
\forall s,t\in \SS,\qquad d_\cZ\big(\{\Gamma(s),\Gamma(t)\},g(\MM)\big)\lesssim  \ee(\SS;\cZ) \alpha \|f\|_{\Lip(\sub;\MM)}^\theta   d_\SS(\{s,t\},\sub)^\theta,
\end{equation}
and therefore the following estimate holds for every $s,t\in \SS$:
$$
\max\Big\{d_\cZ\big(\Gamma(s),\Gamma(t)\big),d_\cZ\big(\{\Gamma(s),\Gamma(t)\},g(\MM)\big)^{1-\theta}d_\cZ\big(\Gamma(s),\Gamma(t)\big)^\theta\Big\}
\stackrel{\eqref{eq:Gamma}\wedge\eqref{eq:st}}{ \lesssim} \ee(\SS;\cZ)\alpha \|f\|_{\Lip(\sub;\MM)}^\theta  d_\SS(s,t)^\theta.
$$
In combination with~\eqref{eq:factor}, this implies that: 
$$
\forall s,t\in \SS,\qquad d_\MM\big(h\circ \Gamma(s),h\circ \Gamma(t)\big)^\theta\lesssim K\ee(\SS;\cZ) L^\theta d_\SS(s,t)^\theta.
$$
In other words, the Lipschitz constant of $h\circ \Gamma:\SS\to \MM$ satisfies: 
$$
\|h\circ  \Gamma\|_{\Lip(\SS;\MM)}^\theta\lesssim K\ee(\SS;\cZ)\|f\|_{\Lip(\sub;\MM)}^\theta.
$$
So, $\ee(\SS,\sub;\MM)^\theta\lesssim K\ee(\SS;\cZ)$, as   $h\circ \Gamma$ extends $f$ thanks to~\eqref{eq:composition identity} and~\eqref{eq:Gamma composition},  and $f$ is an arbitrary Lipschitz function from $\sub$ to $\MM$. Because $\sub$ is an arbitrary closed subset of $\SS$, the desired estimate~\eqref{eq:restate extension goal} follows.
\end{proof}

\begin{proof}[Proof of Lemma~\ref{lem:two scales}] For each $k\in \Z$ fix a $2^k$-net $\NN_k\subset \sub$ of $\sub$, i.e., $d_\MM(a,b)\ge 2^k$ for all distinct $a,b\in \NN_k$, and for every $a\in \sub$ there is $c\in \NN_k$ such that $d_\MM(a,c)\le 2^k$. It follows from this that:
\begin{equation}\label{eq:distance from outside to net in C}
\forall x\in \MM,\qquad d_\MM(x,\NN_k)\le d_\MM(x,\sub)+2^k.
\end{equation}

Distinct $a,b\in \NN_k$ satisfy $d_\cZ(f(a),f(b))\le \omega(d_\MM(a,b))\le \omega(2^k)d_\MM(a,b)/2^k$, using $d_\MM(a,b)\ge 2^k$ and the fact that $(t\in (0,\infty))\mapsto \omega(t)/t$ is nonincreasing, by the concavity of $\omega$ and $\omega(0)=0$.     So, the restriction of $f$ to $\NN_k$ is Lipschitz with constant $\omega(2^k)/2^k$, whence there is $\Phi_k:\MM\to \cZ$ satisfying: 
\begin{equation}\label{eq:extended from net}
\forall (x,y,a)\in \MM\times \MM\times \NN_k,\qquad d_\cZ\big(\Phi_k(x),\Phi_k(y)\big)\lesssim \ee(\MM;\cZ)\frac{\omega(2^k)}{2^k}d_\MM(x,y)\qquad\mathrm{and}\qquad \Phi_k(a)=f(a). 
\end{equation}

We record in passing (for ease of later use) the following simple estimate:
\begin{equation}\label{eq:increment}
\forall x\in \MM,\qquad d_\cZ\big(\Phi_k(x),\Phi_{k+1}(x)\big)\lesssim \ee(\MM;\cZ)\frac{\omega(2^k)}{2^k} \big(d_\MM(x,\sub)+2^k\big).
\end{equation}
To explain why~\eqref{eq:increment} holds, take $a\in \sub$ such that $d_\MM(x,a)\lesssim d_\MM(x,\sub)$, as well as $u\in \NN_k$ and $v\in \NN_{k+1}$ such that $d_\MM(u,a)\le 2^k$ and $d_\MM(v,a)\le 2^{k+1}$.  Then:
\begin{multline*}
d_\cZ\big(\Phi_k(x),\Phi_{k+1}(x)\big)\stackrel{\eqref{eq:extended from net}}{\le} d_\cZ\big(\Phi_k(x),\Phi_k(u)\big)+d_\cZ\big(f(u),f(a)\big)+d_\cZ\big(f(a),f(v)\big)+d_\cZ\big(\Phi_{k+1}(v),\Phi_{k+1}(x)\big)\\
\stackrel{\eqref{eq:omega modulus}\wedge \eqref{eq:extended from net}}{\lesssim} \ee(\MM;\cZ)\frac{\omega(2^k)}{2^k} d_\MM(x,u)+\omega\big(d_\MM(u,a)\big)+\omega\big(d_\MM(v,a)\big)+\ee(\MM;\cZ)\frac{\omega(2^{k+1})}{2^k} d_\MM(v,x),
\end{multline*}
which implies~\eqref{eq:increment}, because $d_\MM(u,a),d_\MM(v,a)\lesssim 2^k$,  by the triangle inequality and the above choice of $a$ we have $d_\MM(x,u),d_\MM(x,v)\lesssim d_\MM(x,a)+2^k\lesssim d_\MM(x,\sub)+2^k$, and because $\omega(O(1)s)\lesssim \omega(s)$ for every $s>0$. 

Fix a conical geodesic bicombing $\gamma$ on $\cZ$, i.e., it is a function as in~\eqref{eq:bicombing1} that satisfies~\eqref{eq:bicombing2} and~\eqref{eq:bicombing3}. Define:
\begin{equation}\label{our extended F}
\forall x\in \MM\setminus \sub,\qquad F(x)\eqdef \gamma\Big(\Phi_{k(x)}(x),\Phi_{k(x) +1}(x),\frac{d_\MM(x,\sub)}{2^{k(x)}}-1\Big),\quad\mathrm{where} \quad  k(x)\eqdef \left\lfloor \log_2 d_\MM(x,\sub)\right\rfloor\in \Z.
\end{equation}
 Note that~\eqref{our extended F} makes sense as $\sub$ is closed and $x\notin \sub$, so $d_\MM(x,\sub)>0$.  Also set $F(a)=f(a)$ for  $a\in \sub$. Our goal is  to  check~\eqref{eq:omega extended}. If $\{x,y\}\subset \sub$, then~\eqref{eq:omega extended} follows from~\eqref{eq:omega modulus}, even without  the factor $\ee(\MM;\cZ)\ge 1$.  Hence, we may assume without loss of generality that $x,y\in \MM$ are distinct, $d_\MM(x,\sub)>0$ and $d_\MM(x,\sub)\ge d_\MM(y,\sub)$.

If $y\in \sub$, then $F(y)=f(y)$ and our goal~\eqref{eq:omega extended} becomes $d_\cZ(F(x),f(y))\lesssim \ee(\MM;\cZ)\omega(d_\MM(x,y))$. To see why this indeed holds, observe first that:
\begin{align}\label{eq:y in C}
\begin{split}
d_\cZ\big(F(x),f(y)\big)&\stackrel{\eqref{our extended F}}{=}d_\cZ\bigg(\gamma\Big(\Phi_{k(x)}(x),\Phi_{k(x) +1}(x),\frac{d_\MM(x,\sub)}{2^{k(x)}}-1\Big),\gamma\Big(f(y),f(y),\frac{d_\MM(x,\sub)}{2^{k(x)}}-1\Big)\bigg)\\&\stackrel{\eqref{eq:bicombing3}}{\le} \Big(2-\frac{d_\MM(x,\sub)}{2^{k(x)}}\Big)
d_\cZ\big(\Phi_{k(x)}(x),f(y)\big)+\Big(\frac{d_\MM(x,\sub)}{2^{k(x)}}-1\Big)d_\cZ\big(\Phi_{k(x)+1}(x),f(y)\big)\\
&\le 
d_\cZ\big(\Phi_{k(x)}(x),f(y)\big)+d_\cZ\big(\Phi_{k(x)+1}(x),f(y)\big).
\end{split}
\end{align}
The first summand in the right hand side of~\eqref{eq:y in C} can be bounded as follows:
\begin{align}
\begin{split}\label{eq:kx near y}
d_\cZ\big(\Phi_{k(x)}(x),f(y)\big)&\le \inf_{a\in \NN_{k(x)}} \Big(d_\cZ\big(\Phi_{k(x)}(x),\Phi_{k(x)}(a)\big)+d_\cZ\big(f(a),f(y)\big)\Big)\\& \lesssim
\inf_{a\in \NN_{k(x)}} \Big(\ee(\MM;\cZ)\frac{\omega(2^{k(x)})}{2^{k(x)}}d_\MM(x,a)+\omega\big(d_\MM(a,x)+d_\MM(x,y)\big)\Big)\\& \lesssim \ee(\MM;\cZ)\omega\big(d_\MM(x,y)\big)+\omega\big(3d_\MM(x,y)\big)\lesssim \ee(\MM;\cZ)\omega\big(d_\MM(x,y)\big),
\end{split}
\end{align}
where the first step of~\eqref{eq:kx near y} holds by the triangle inequality and the second part of~\eqref{eq:extended from net},  the second step of~\eqref{eq:kx near y} uses~\eqref{eq:omega modulus} and~\eqref{eq:extended from net}, as well as $d_\MM(a,y)\le d_\MM(a,x)+d_\MM(x,y)$ and  that $\omega$ is nondecreasing, the third step of~\eqref{eq:kx near y}  holds as $2^{k(x)}\asymp d_\MM(x,\sub)$ and  $d_\MM(x,\NN_{k(x)})\le d_\MM(x,\sub) +2^{k(x)}\le 2d_\MM(x,\sub)\le 2d_\MM(x,y)$ using~\eqref{eq:distance from outside to net in C}, the definition of $k(x)$ in~\eqref{our extended F} and $y\in \sub$, and the final step of~\eqref{eq:kx near y} holds since $\omega(O(1)s)\lesssim \omega(s)$ for every $s>0$.     The second summand in the right hand side of~\eqref{eq:y in C} is controlled analogously: 
\begin{align}
\begin{split}\label{eq:kx+1 near y}
d_\cZ\big(\Phi_{k(x)+1}(x),f(y)\big)&\le \inf_{a\in \NN_{k(x)+1}} \Big(d_\cZ\big(\Phi_{k(x)+1}(x),\Phi_{k(x)+1}(a)\big)+d_\cZ\big(f(a),f(y)\big)\Big)\\& \lesssim
\inf_{a\in \NN_{k(x)+1}} \Big(\ee(\MM;\cZ)\frac{\omega(2^{k(x)+1})}{2^{k(x)+1}}d_\MM(x,a)+\omega\big(d_\MM(a,x)+d_\MM(x,y)\big)\Big)\\& \lesssim \ee(\MM;\cZ)\omega\big(2d_\MM(x,y)\big)+\omega\big(4d_\MM(x,y)\big)\lesssim \ee(\MM;\cZ)\omega\big(d_\MM(x,y)\big).
\end{split}
\end{align}
The desired estimate~\eqref{eq:omega extended} when $y\in \sub$ now follows by substituting~\eqref{eq:kx near y} and~\eqref{eq:kx+1 near y} into~\eqref{eq:y in C}.

If $d_\MM(x,y)\ge d_\MM(y,\sub)=d_\MM(\{x,y\},\sub)$, then the desired estimate~\eqref{eq:omega extended} would follow if we demonstrate that $d_\cZ(F(x),F(y))\lesssim \ee(\MM;\cZ)\omega(d_\MM(x,y))$. The latter inequality  indeed holds because we may fix $a,b\in \sub$ such that $d_\MM(x,a)\lesssim d_\MM(x,\sub)\le d_\MM(x,y)+d_\MM(y,\sub)\le 2d_\MM(x,y)$ and $d_\MM(y,b) \lesssim d_\MM(y,\sub)\le d_\MM(x,y)$, whence also $d_\MM(a,b)\le d_\MM(a,x)+d_\MM(x,y)+d_\MM(y,b)\lesssim d_\MM(x,y)$, and conclude that:
\begin{align}\label{eq:different ks}
\begin{split}
d_{\cZ}\big(F(x),F(y)\big)\le d_{\cZ}\big(F(x),&F(a)\big)+d_{\cZ}\big(f(a),f(b)\big)+d_{\cZ}\big(F(b),F(y)\big)\\&\lesssim \ee(\MM;\cZ)\omega\big(O(1)d_\MM(x,y)\big)\lesssim \ee(\MM;\cZ)\omega\big(d_\MM(x,y)\big),
\end{split}
\end{align}
where the penultimate step of~\eqref{eq:different ks} is an instantiation of the (already proven) case $\{x,y\}\cap \sub\neq\emptyset$ of~\eqref{eq:omega extended}, together with the aforementioned bounds $d_\MM(x,a),d_\MM(y,b),d_\MM(a,b)\lesssim d_\MM(x,y)$.

The remaining (and most significant) case of~\eqref{eq:omega extended} is  $d_\MM(x,y)< d_\MM(y,\sub)$, so the integers $k(y)\le k(x)$ from~\eqref{our extended F} are well defined, and  determined by $2^{k(y)}\le d_\MM(y,\sub)<2^{k(y)+1}$ and  $2^{k(x)}\le d_\MM(x,\sub)<2^{k(x)+1}$.  Observe that $2^{k(x)}\le d_\MM(x,\sub)\le d_\MM(x,y)+d_\MM(y,\sub)<2d_\MM(y,\sub)<2^{k(y)+2}$, whence  $k(x)\in \{k(y),k(y)+1\}$. Writing from now $k=k(y)$, we thus have $k(x)\in \{k,k+1\}$  and $2^{k}\asymp d_\MM(x,\sub)\asymp d_\MM(y,\sub)=d_\MM(\{x,y\},\sub)$. Furthermore, because $\omega(O(1)2^k)\lesssim \omega(2^k)$, the desired inequality~\eqref{eq:omega extended} becomes:
\begin{equation}\label{eq:reduced goal interesting case}
 d_\cZ\big(F(x),F(y)\big)\lesssim \ee(\MM;\cZ)\frac{\omega(2^k)}{2^k}d_\MM(x,y). 
\end{equation}
We will prove~~\eqref{eq:reduced goal interesting case}  by treating the last two remaining cases $k(x)=k$ and $k(x)=k+1$ separately.

Suppose firstly that $k(x)=k=k(y)$. Then: 
\begin{align*}
d_\cZ\big(F(x),F(y)\big) &\!\!\stackrel{\eqref{our extended F}}{\le} d_\cZ\bigg(\gamma\Big(\Phi_{k}(x),\Phi_{k +1}(x),\frac{d_\MM(x,\sub)}{2^{k}}-1\Big),\gamma\Big(\Phi_{k}(x),\Phi_{k +1}(x),\frac{d_\MM(y,\sub)}{2^{k}}-1\Big)\bigg)\\&\qquad\qquad +d_\cZ\bigg(\gamma\Big(\Phi_{k}(x),\Phi_{k +1}(x),\frac{d_\MM(y,\sub)}{2^{k}}-1\Big),\gamma\Big(\Phi_{k}(y),\Phi_{k +1}(y),\frac{d_\MM(y,\sub)}{2^{k}}-1\Big)\bigg)\\
&\!\!\!\!\!\!\!\stackrel{\eqref{eq:bicombing2}\wedge\eqref{eq:bicombing3}}{\le} \frac{d_\MM(x,\sub)-d_\MM(y,\sub)}{2^k}d_\cZ\big(\Phi_k(x),\Phi_{k+1}(x)\big)+d_\cZ\big(\Phi_k(x),\Phi_k(y)\big)+
d_\cZ\big(\Phi_{k+1}(x),\Phi_{k+1}(y)\big)\\&\!\!\!\!\!\!\!\!\!\stackrel{\eqref{eq:extended from net}\wedge \eqref{eq:increment}}{\lesssim}
\frac{d_\MM(x,y)}{2^k}\ee(\MM;\cZ)\frac{\omega(2^k)}{2^k} 2^k+
\ee(\MM;\cZ)\frac{\omega(2^k)}{2^k}d_\MM(x,y)+\ee(\MM;\cZ)\frac{\omega(2^{k+1})}{2^{k+1}}d_\MM(x,y)\\&\lesssim \ee(\MM;\cZ)\frac{\omega(2^k)}{2^k}d_\MM(x,y),
\end{align*}
where the penultimate  step uses the triangle inequality through $d_\MM(x,\sub)-d_\MM(y,\sub)\le d_\MM(x,y)$. This proves~\eqref{eq:reduced goal interesting case} when $k(x)=k$. 

Justifying the final case  $k(x)=k+1=k(y)+1$ is analogous and only a bit more involved as it requires  one additional triangle inequality step as follows:
\begin{align*}
d_\cZ\big(F(x),F(y)\big) &\!\!\stackrel{\eqref{our extended F}}{\le} d_\cZ\bigg(\gamma\Big(\Phi_{k+1}(x),\Phi_{k +2}(x),\frac{d_\MM(x,\sub)}{2^{k+1}}-1\Big),\Phi_{k+1}(x)\bigg)\\&\qquad\qquad+d_\cZ\bigg(\Phi_{k+1}(x),\gamma\Big(\Phi_{k}(x),\Phi_{k +1}(x),\frac{d_\MM(y,\sub)}{2^{k}}-1\Big)\bigg)\\&\qquad\qquad +d_\cZ\bigg(\gamma\Big(\Phi_{k}(x),\Phi_{k +1}(x),\frac{d_\MM(y,\sub)}{2^{k}}-1\Big),\gamma\Big(\Phi_{k}(y),\Phi_{k +1}(y),\frac{d_\MM(y,\sub)}{2^{k}}-1\Big)\bigg)\\
&=d_\cZ\bigg(\gamma\Big(\Phi_{k+1}(x),\Phi_{k+2}(x),\frac{d_\MM(x,\sub)}{2^{k+1}}-1\Big),\gamma\Big(\Phi_{k+1}(x),\Phi_{k+2}(x),0\Big)\bigg)
\\&\qquad\qquad+d_\cZ\bigg(\gamma\Big(\Phi_{k}(x),\Phi_{k +1}(x),1\Big),\gamma\Big(\Phi_{k}(x),\Phi_{k +1}(x),\frac{d_\MM(y,\sub)}{2^{k}}-1\Big)\bigg)\\&\qquad\qquad +d_\cZ\bigg(\gamma\Big(\Phi_{k}(x),\Phi_{k +1}(x),\frac{d_\MM(y,\sub)}{2^{k}}-1\Big),\gamma\Big(\Phi_{k}(y),\Phi_{k +1}(y),\frac{d_\MM(y,\sub)}{2^{k}}-1\Big)\bigg)\\
&\!\!\!\!\!\!\!\stackrel{\eqref{eq:bicombing2}\wedge\eqref{eq:bicombing3}}{\le} \frac{d_\MM(x,\sub)-2^{k+1}}{2^{k+1}}d_\cZ\big(\Phi_{k+1}(x),\Phi_{k+2}(x)\big)+\frac{2^{k+1}-d_\MM(y,\sub)}{2^{k}}
d_\cZ\big(\Phi_{k}(x),\Phi_{k+1}(x)\big)
\\ &\qquad\qquad+d_\cZ\big(\Phi_k(x),\Phi_k(y)\big)+
d_\cZ\big(\Phi_{k+1}(x),\Phi_{k+1}(y)\big)\\&\!\!\!\!\!\!\!\!\!\stackrel{\eqref{eq:extended from net}\wedge \eqref{eq:increment}}{\lesssim}
\frac{d_\MM(x,\sub)-2^{k+1}}{2^k}\ee(\MM;\cZ)\omega(2^{k+1})+\frac{2^{k+1}-d_\MM(y,\sub)}{2^k}\ee(\MM;\cZ)\omega(2^{k})\\&\qquad\qquad +
\ee(\MM;\cZ)\frac{\omega(2^k)}{2^k}d_\MM(x,y)+\ee(\MM;\cZ)\frac{\omega(2^{k+1})}{2^{k+1}}d_\MM(x,y)\\&\lesssim 
\frac{d_\MM(x,\sub)-d_\MM(y,\sub)}{2^k}\ee(\MM;\cZ)\omega(2^{k})+\ee(\MM;\cZ)\frac{\omega(2^k)}{2^k}d_\MM(x,y)\lesssim \ee(\MM;\cZ)\frac{\omega(2^k)}{2^k}d_\MM(x,y).
\end{align*}

The last part of Lemma~\ref{lem:two scales} that $F$ can be taken to depend linearly on $f$ when $\cZ$ is a Banach space follows  from its definition~\eqref{our extended F} if we take $\gamma$ to be the canonical convex combination bicombing,  for each $k\in \Z$ we let $\Phi_k$ be the image of $f|_{\NN_k}$ under a suitable fixed linear extension operator (corresponding to extension from $\NN_k$), and with $\ee_l(\MM;\cZ)$ replacing $\ee(\MM;\cZ)$ in the right hand side of~\eqref{eq:extended from net}.    
\end{proof}

\subsection*{Added in proof} Forthcoming work with Manor Mendel~\cite{MN26} answers the question posed in Remark~\ref{rem:Mcotype}  by demonstrating  that $L_1$ has metric Markov cotype $2$. Also, \cite{MN26} confirms our prediction in the discussion following Corollary~8 by showing that indeed $\ee(L_q;L_1)\lesssim \sqrt{q}$ for every $q\ge 2$.

\bibliographystyle{alphaabbrvprelim}
\bibliography{deH}

\end{document}